\DeclareFontFamily{OMS}{smallo}{}
\DeclareFontShape{OMS}{smallo}{m}{n}{<->s*[.65]cmsy10}{}
\DeclareSymbolFont{smallo@m}{OMS}{smallo}{m}{n}
\DeclareMathSymbol{\smallo}{\mathord}{smallo@m}{79}
\theoremstyle{definition}
\newtheorem{thm}{Theorem}[section]
\newtheorem*{thmunnumbered}{Theorem}
\newtheorem{distalcriterion}[thm]{Distal Criterion}
\newtheorem{theorem}{Theorem}[section]
\newtheorem{definition}[thm]{Definition}
\newtheorem{lemma}[thm]{Lemma}
\newtheorem{cor}[thm]{Corollary}
\newtheorem{prop}[thm]{Proposition}
\newtheorem{remark}[thm]{Remark}
\newtheorem{question}[thm]{Question}
\newtheorem{example}[thm]{Example}
\newcommand\N{\mathbb{N}}
\newcommand\Q{\mathbb{Q}}
\newcommand\R{\mathbb{R}}
\newcommand\M{\mathbb{M}}
\DeclareMathOperator\Th{Th}
\renewcommand\epsilon{\varepsilon}
\DeclareFontFamily{U}{fsy}{}
\DeclareFontShape{U}{fsy}{m}{n}{<->s*[.9]psyr}{}
\DeclareSymbolFont{der@m}{U}{fsy}{m}{n}
\DeclareMathSymbol{\der}{\mathord}{der@m}{182}
\renewcommand\L{\mathcal{L}}
\author{Allen Gehret}
\author{Travis Nell}
\title{Hamel spaces and distal expansions}
\email{allen@math.ucla.edu}
\email{tnell2@illinois.edu}
\subjclass[2010]{Primary 03C64, Secondary 03C45, 06F20}
\address{Department of Mathematics, University of California, Los Angeles, Los Angeles, CA 90095}
\address{Department of Mathematics, University of Illinois at Urbana-Champaign, Urbana, IL 61801}
\date{\today}
\keywords{distality, dp-rank, independence property, o-minimality, dense pairs, Hamel basis}
\begin{document}

\maketitle

\setcounter{tocdepth}{1}
\tableofcontents

\begin{abstract}
In this note, we construct a distal expansion for the structure $(\R; +,<,H)$, where $H\subseteq \R$ is a dense $\Q$-vector space basis of $\R$ (a so-called \emph{Hamel basis}). Our construction is also an expansion of the dense pair $(\R; +,<,\Q)$ and has full quantifier elimination in a natural language.
\end{abstract}

\section{Introduction}

\noindent
Distal theories were introduced by Simon in~\cite{Distal} as a way to isolate those NIP theories which are purely unstable.
For example, all $o$-minimal theories and all $P$-minimal theories are distal, whereas the theory of algebraically closed valued fields is non-distal due to the presence of a stable algebraically closed residue field.
Distality has useful combinatorial consequences. For instance, in~\cite{ChernikovStarchenko},
Chernikov and Starchenko show that distal theories enjoy a version of
the \emph{strong Erd\"{o}s-Hajnal Property}. 
These combinatorial consequences also apply to theories which have a distal expansion, i.e., to reducts of a distal theory.
This paper shows that a certain class of non-distal theories have a distal expansion. In this introduction, we describe the most important case of this construction.

%In \cite{Distal}, Simon isolated a subclass of NIP theories that he
%termed \emph{distal}. The motivation for this definition is to
%characterize which NIP theories are purely unstable. For example, real
%closed fields, or more generally any $o$-minimal structure will have a
%distal theory. Algebraically closed valued fields are non-distal, as
%the residue field sort carries the stable structure of an
%algebraically closed field.
%
%
%In \cite{Hieronymi_Nell}, the second author proved certain expansions
%of $(\mathbb{R},+,<)$ were non-distal. Simon then asked whether these
%structures admit a distal expansion. Note that distality is not
%preserved under reducts, as the theory of equality is stable. In
%particular interest for this work are the structures
%$(\mathbb{R},+,<,\mathbb{Q})$ and $(\mathbb{R},+,<,H)$ where $H$ is a
%dense basis for $\mathbb{R}$ as a $\mathbb{Q}$-vector space. We
%construct a distal theory $T_\text{Ham}$ which can interpret both of
%these structures, thus illustrating distal expansions. In the case
%$(\mathbb{R},+,<,\mathbb{Q})$, the second author constructed a distal
%expansion in ADD CITATION. The other case, of a dense basis, this was
%not known.
%
%
%Distality has strong combinatorial consequences. In \cite{ChernikovStarchenko},
%Chernikov and Starchenko show that distal theories enjoy a version of
%the strong Erd\"{o}s Hajnal theorem. This combinatorial regularity
%does pass to reducts. Thus finding a distal expansion for a theory
%also proves Erd\"{o}s Hajnal.

\medskip\noindent
In~\cite{Hieronymi_Nell}, Hieronymi and Nell showed that two particular structures are not distal: $(\R;+,<,\Q)$ and $(\R;+,<,H)$, where $H\subseteq \R$ is a dense $\Q$-vector space basis of $\R$ (a so-called \emph{Hamel basis}). 
These findings were initially unexpected, as these structures are closely related to their common o-minimal reduct $(\R; +,<)$.
Simon then asked whether these structures at least have some distal expansion. 
In~\cite{NellMLQ}, Nell constructed a distal expansion of $(\R;+,<,\Q)$, essentially by equipping the quotient $\R/\Q$ with a linear order.
For the structure $(\R;+,<,H)$, a similar trick could not be employed since this structure has elimination of imaginaries~\cite{DolichMillerSteinhorn}.
In this paper we construct a distal expansion of $(\R;+,<,H)$. We now describe our construction, as it applies to $(\R;+,<,H)$, in greater detail.

\medskip\noindent
We expand $(\R;+,<,H)$ to a structure $(\R;+,<,H,v,<_1,\infty)$ with three new primitives: a unary function $v$, a binary relation $<_1$, and a constant $\infty$. 

\medskip\noindent
First, we define $v:\R\setminus\{0\}\to H\subseteq \R$ as follows. Let $\alpha\in\R\setminus\{0\}$ be nonzero.
As $H$ is a basis of $\R$ as a vector space over $\Q$, there are $n\geq1$, basis vectors $h_1,\ldots,h_n\in H$ and scalars $q_1,\ldots,q_n\in \Q^{\times}$ such that $\alpha = q_1h_1+\cdots+q_nh_n$ and $h_1<\cdots<h_n$. The data $(n,h_1,\ldots,h_n,q_1,\ldots,q_n)$ is uniquely determined by the requirement that $h_1<\cdots<h_n$. Thus we define $v(\alpha):= h_1$, and this will be well-defined.

\medskip\noindent
We define the binary relation $<_1$ as the unique ordering which makes $(\R;+)$ into an ordered group such that
\[
0 <_1  \alpha \ :\Longleftrightarrow \ 0  <  q_1,\quad\text{for every $\alpha=q_1h_1+\cdots+q_nh_n\neq 0$ as above.}
\]

\medskip\noindent 
Finally, we introduce a new element $\infty$ as a default value:
\[
v(0) \ = \ v(\infty) \ = \ \alpha+\infty \ = \ \infty+\alpha \ = \ \infty +\infty \ := \ \infty, \quad  \alpha <\infty,\quad\text{and}\quad \alpha  <_1  \infty
\]
for every $\alpha\in\R$. We introduce this element $\infty$ mainly for aesthetic reasons. Indeed, this function $v:\R\to H\cup\{\infty\}$ is in fact a convex valuation on the ordered abelian group $(\R;+,<_1)$ with value set $(H\cup\{\infty\},<)$.

\medskip\noindent
In this paper, we work over an arbitrary ordered $C$-vector space, where $C$ is an ordered field. We call the valuation $v$ constructed as above a \emph{Hamel valuation}, as its image is a Hamel basis of the underlying ordered vector space. 
Likewise, we call the resulting structure a \emph{Hamel space}. In a natural language $\mathcal{L}_{\text{Ham}}$, we formulate a certain theory $T_{\text{Ham}}$ of these Hamel spaces. The main result of the paper is:

\begin{thmunnumbered}[\ref{THamdistal}]
$T_{\operatorname{Ham}}$ is distal.
\end{thmunnumbered}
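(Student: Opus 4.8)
The plan is to verify Simon's distal criterion for NIP theories~\cite{Distal}. Work in a monster model $\M\models T_{\operatorname{Ham}}$; fix an $\emptyset$-indiscernible sequence $(a_i)_{i\in I}$ indexed by a dense endless linear order $I=I_1+\{i_0\}+I_2$, with $I_1,I_2$ infinite and endless, and a tuple $b$ from $\M$. We must show: if $(a_i)_{i\in I\setminus\{i_0\}}$ is $b$-indiscernible, then $(a_i)_{i\in I}$ is $b$-indiscernible. We would first establish the prerequisite that $T_{\operatorname{Ham}}$ is NIP --- by the quantifier elimination, every formula is a Boolean combination of atomic formulas built from the two ordered-vector-space structures $(\M;+,<)$ and $(\M;+,<_1)$, the linearly ordered value set, and the valuation $v$; each ingredient is NIP and the structure is coordinatized by them, so NIP is inherited (this is also visible from the dp-rank bounds obtained in the paper).

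By quantifier elimination it suffices to show that every atomic $\mathcal{L}_{\operatorname{Ham}}$-formula $\varphi(x_{k_1},\dots,x_{k_n};y)$ has the same truth value at $(a_{k_1},\dots,a_{k_n};b)$ for every increasing tuple $k_1<\cdots<k_n$ from $I$; by hypothesis this is already known whenever no $k_j$ equals $i_0$, so only the case $k_m=i_0$ for some $m$ remains. Three families of atomic formulas occur: (i) linear relations $\sum_j c_j x_{k_j}+d\,y\ \square\ 0$ with $\square\in\{<,<_1,=\}$ and scalars $c_j,d\in C$; (ii) valuation comparisons $v(s)\ \square\ v(t)$ with $\square\in\{<,=\}$, together with the auxiliary predicates $t=\infty$ and $t\in H$, for $\mathcal{L}_{\operatorname{Ham}}$-terms $s,t$; since the value set carries nothing but its order, these reduce to order comparisons among the valuations of linear combinations of the $a_{k_j}$ and $b$. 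Families (i) and the ``pure order'' content of (ii) come essentially for free: $(a_i)_{i\in I}$ stays indiscernible, and $(a_i)_{i\neq i_0}$ stays $b$-indiscernible, in each of the reducts $(\M;+,<)$, $(\M;+,<_1)$, and the value set $(H,<)$; each of these reducts is o-minimal --- the first two as ordered $C$-vector spaces, the last as a dense endless linear order --- hence distal, so Simon's criterion applied inside each reduct yields $b$-indiscernibility of $(a_i)_{i\in I}$ there. What remains, and what is genuinely mixed, is the interaction of $v$ with linear combinations in which $a_{i_0}$ occurs.

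Hence the main obstacle is to show that for $\gamma=\sum_j c_j a_{k_j}+d\,b$ with $a_{i_0}$ among the $a_{k_j}$, the value $v(\gamma)$, and each comparison $v(\gamma)\ \square\ v(\gamma')$ appearing in an atomic formula, is determined independently of which index of $I$ occupies position $m$. The principal tool is the ultrametric inequality for the convex valuation $v$,
\[
v(\alpha+\beta)\ \geq\ \min\bigl(v(\alpha),v(\beta)\bigr),\qquad\text{with equality when }v(\alpha)\neq v(\beta),
\]
together with the $T_{\operatorname{Ham}}$-axioms tying $v$ to $<$, $<_1$ and $H$: in effect $v$ reads off the smallest basis vector in a support and $<_1$ reads off its coefficient. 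We would then note that $(v(a_i))_{i\in I}$ is itself indiscernible in the value set, so the location of $v(a_{i_0})$ is harmless; use the genericity of the cut at $i_0$ to fix where $v(a_{i_0})$, and the valuations of the $b$-adjusted differences $a_i-(\text{a combination of }b)$, lie relative to $b$; and then run a finite case analysis over the possible valuation configurations of $\gamma$ --- which of the $v(c_j a_{k_j})$ coincide, whether leading coefficients cancel, how $v(d\,b)$ fits in --- verifying in each case that swapping the middle index leaves $v(\gamma)$ unmoved. We expect this case analysis, together with the preliminary identification of exactly which atomic statements must be controlled after quantifier elimination, to be where essentially all the work lies; the three coordinatizing reducts contribute only routine bookkeeping once the valuation-theoretic picture is pinned down.
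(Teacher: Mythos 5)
Your overall skeleton --- use the quantifier elimination (Theorem~\ref{THamQE}) to reduce the one-point indiscernibility condition of Definition~\ref{distaldef} to atomic formulas, then control the valuation by convexity and the ultrametric law --- is in the right spirit, and is roughly what the paper does through Distal Criterion~\ref{distal_multi}. But your inventory of atomic formulas is wrong in a way that hides most of the actual work. In $T_{\text{Ham}}$ the valuation maps $G$ into $G_\infty$ itself: there is no separate value sort ``carrying nothing but its order''. Consequently atomic formulas include comparisons of $v$-terms with arbitrary group terms under both $<_0$ and $<_1$ (for instance $v(x_1-y)<_1 x_2+2y$), and terms may nest $v$ (for instance $v\big(v(x_1)+x_2\big)$). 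Your families (i) and (ii) omit these, and the claim that the value-set contribution ``comes for free'' because $(H,<)$ is a pure dense order is not available: $v(G^{\neq})$ sits inside $G$ and interacts with the linear structure, and the fact that it is $C$-linearly independent (Lemma~\ref{vGindependent}) is a lemma that must be proved and then exploited. In the paper it is exactly what powers the key simplification (Lemma~\ref{nonconstantsimplification}): a term taking values in $v(\M)$ along an indiscernible sequence from powers of $v(\M)$ must be constantly $\infty$, constantly some $\beta$, or equal to one coordinate $a_{i,l}$. Nothing in your sketch plays this role; nor does anything play the role of the finite-generation statement (Proposition~\ref{mainextprop}), which is what the criterion route uses (condition (2) of~\ref{distal_multi}) to normalize arbitrary nested-$v$ terms down to the single controlled equation $v\big(g(a_i,b_1)\big)=h(a_i,b_2)$ with $g,h$ valuation-free terms. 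If you bypass that criterion, you must supply your own normalization; a ``finite case analysis over the valuation configurations'' of one linear combination $\gamma=\sum_j c_j a_{k_j}+db$ does not cover it.

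Even in the normalized situation, the pivotal steps are more specific than ``genericity of the cut''. When the putative value is a fixed $\beta\in v(G^{\neq})$, the argument sandwiches $a_c-b$ in the $<_1$-order between terms indexed in $I_1$ and in $I_2$ and invokes convexity of $v$ with respect to $<_1$ (Lemma~\ref{nonconstantconstant}); when the putative value is the moving coordinate $a_{c,l}'$, one computes $v(a_i-a_j)$ for indices off the cut and transfers to $c$ via the ultrametric identity using a witness $j\in I_2$ (Lemma~\ref{nonconstantnonconstant}); the constant-sequence case needs its own (routine) treatment (Lemma~\ref{gconstantseq}). These short arguments are the mathematical content of the distality proof, and your proposal gestures toward but does not reach them. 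Finally, the NIP preamble is both unnecessary --- the paper verifies Definition~\ref{distaldef} directly, and NIP falls out as a corollary --- and unsound as argued: atomic formulas genuinely mixing $v$ with the two orderings and the $C$-linear structure do not live in any of your NIP ``ingredients'', so NIP of the expansion cannot simply be ``inherited by coordinatization''.
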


\medskip\noindent
In \S\ref{distalNIPsection} we recall the definition of distality for a theory $T$, as well as for a partitioned formula $\varphi(x;y)$. We also provide the statement of Distal Criterion~\ref{distal_multi}, a criterion we will later use to prove that the theory $T_{\text{Ham}}$ is distal.

\medskip\noindent
In \S\ref{IorderedCvectorspaces} we introduce \emph{$I$-ordered $C$-vector spaces}. These are vector spaces over an ordered field $C$, equipped with a family of compatible linear orderings $<_i$, indexed by $i\in I$. We construct a complete theory $T_{C,I}$ of these spaces, and show that it is distal. The arguments in this section are routine; however it establishes the language and theory that we will expand when constructing $T_{\text{Ham}}$.

\medskip\noindent
In \S\ref{Hamel spaces} we introduce \emph{Hamel valuations} and \emph{Hamel spaces over $C$}. We give a language $\mathcal{L}_{\text{Ham}}$ of Hamel spaces over $C$, and prove that a certain $\mathcal{L}_{Ham}$-theory $T_{\text{Ham}}$ admits quantifier elimination and is complete. This section is the main technical part of the paper.
%
%\medskip\noindent
In \S\ref{DistalityHamelSpacesSection} we prove the main result of the paper. This involves various lemmas on how indiscernible sequences behave in Hamel spaces and applying Distal Criterion~\ref{distal_multi}. We also point out several consequences of distality which are also of model-theoretic interest.

\medskip\noindent
In \S\ref{ConnectionDensePairs} we show in what sense models of $T_{\text{Ham}}$ are expansions of an o-minimal structure with a dense independent set (like $(\R;+,<,H)$). As a bonus, our models of $T_{\text{Ham}}$ are also expansions of dense pairs (like $(\R;+,<,\Q)$), and we show how this works as well. We also point out some natural follow-up questions.
%
%\medskip\noindent
Finally, in \S\ref{DPranksection} we show that $T_{\text{Ham}}$ is not strongly dependent, in contrast to the theories of dense independent sets or dense pairs.

\subsection*{Conventions} Throughout, $m$ and $n$ range over $\N = \{0,1,2,\ldots\}$.

\medskip\noindent
 All orderings are total. Let $S$ be a set and let $<$ be an ordering on $S$. We say that a subset $P\subseteq S$ is \textbf{downward closed}, or is a \textbf{cut}, if for all $a,b\in S$, if $b\in P$ and $a<b$, then $a\in P$. A \textbf{well-indexed sequence} is a sequence $(a_{\rho})$ whose terms $a_{\rho}$ are indexed by the elements $\rho$ of an infinite well-ordered set without a greatest element.

\medskip\noindent Let $I$ be an index set. An \textbf{$I$-ordering} on $S$ is a family $(<_i)_{i\in I}$ of orderings on $S$, and $S$ equipped with this family is referred to as an \textbf{$I$-ordered set}. Suppose $S$ is equipped with an $I$-ordering $(<_i)_{i\in I}$. A subset of $S$ is viewed as an $I$-ordered set as well by the induced ordering for each $i$. We put $S_{\infty}:= S\cup\{\infty\}$, $\infty\not\in S$, with $I$-ordering on $S$ extended to an $I$-ordering on $S_{\infty}$ by declaring $S<_i \infty$ for each $i\in I$. Occasionally, we take two distinct elements $-\infty,\infty\not\in S$ and extend the $I$-ordering on $S$ to $S_{\pm\infty}:= S\cup\{-\infty,\infty\}$ by declaring $-\infty<_iS<_i\infty$ for each $i\in I$. A \textbf{polycut} in $S$ is a family $\big((P_i,Q_i)\big)_{i\in I}$ of pairs of subsets of $S$ such that for each $i\in I$, $P_i$ is a downward closed subset of $(S,<_i)$ and $Q_i = S\setminus P_i$. Given an element $b$ in an $I$-ordered set extending $S$, we say that $b$ \textbf{realizes} the polycut $\big((P_i,Q_i)\big)_{i\in I}$ if for every $i\in I$, $P_i<_i b<_i Q_i$. Given $i\in I$ and $a,b\in S_{\pm\infty}$ such that $a<_ib$, we define
\[
(a,b)_i \ := \ \{s\in S: a<_i s<_i b\}.
\]
Suppose $i\in I$ and $A$ is a finite subset of $S$. Then by $\min_iA$ we mean $\min A$ with respect to the linear order $(S,<_i)$.

%\subsection*{Algebra conventions} 
\medskip\noindent
If $G$ is an expansion of an additively written abelian group, then we set $G^{\neq}:= G\setminus\{0\}$. For a field $C$
 we let $C^{\times}:= C\setminus \{0\} = C^{\neq}$ be its multiplicative group of units.

\medskip\noindent
In general we adopt the model theoretic conventions of Appendix B of~\cite{ADAMTT}. In particular, $\L$ can be a many-sorted language. For a complete $\L$-theory $T$, we will sometimes consider a model $\M\models T$ and a cardinal $\kappa(\M)>|\L|$ such that $\M$ is $\kappa(\M)$-saturated and every reduct of $\M$ is strongly $\kappa(\M)$-homogeneous. Such a model is called a \textbf{monster model} of $T$. In particular, every model of $T$ of size $\leq\kappa(\M)$ has an elementary embedding into $\M$. All variables are finite multivariables. By convention we will write ``indiscernible sequence'' when we mean ``$\emptyset$-indiscernible sequence''.

\section{Preliminaries on distality}
\label{distalNIPsection}

\noindent
\emph{Throughout this section $\L$ is a language and $T$ is a complete $\L$-theory.}

\subsection*{Definition of distality}
\emph{In this subsection we fix a monster model $\M$ of $T$}.

\begin{definition}
\label{distaldef}
We say that $T$ is \textbf{distal} if for every $A\subseteq \M$, for every  $x$, and for every indiscernible sequence $(a_i)_{i\in I}$  from $\M_x$, if
\begin{enumerate}
\item $I = I_1+(c)+I_2$ where $I_1$ and $I_2$ are infinite, and
\item $(a_i)_{i\in I_1+I_2}$ is $A$-indiscernible,
\end{enumerate}
then $(a_i)_{i\in I}$ is $A$-indiscernible. 
Furthermore, we say that an $\L$-structure $\bm{M}$ is \textbf{distal} if $\Th(\bm{M})$ is distal.
\end{definition}

\noindent
It is also convenient to define what it means for a formula $\varphi(x;y)$ to be distal:

\begin{definition}
\label{distaldefformula}
We say a formula $\varphi(x;y)$ is \textbf{distal}
if for every $b\in\M_y$ and every indiscernible sequence $(a_i)_{i\in I}$ from $\M_{x}$, if
\begin{enumerate}
\item $I = I_1+(c)+I_2$ where $I_1$ and $I_2$ are infinite, and
\item $(a_i)_{i\in I_1+I_2}$ is $b$-indiscernible,
\end{enumerate}
then 
$
\models \varphi(a_c;b)\leftrightarrow \varphi(a_i;b)
$
for every $i\in I$.
\end{definition}

\begin{remark}
The collection of all distal formulas in the variables $(x;y)$ is closed under arbitrary boolean combinations, including negations. In the literature, there is another local notion of distality: that of a formula $\varphi(x;y)$ having a \emph{strong honest definition} (see~\cite[Theorem 21]{ExternallyDefinableII}). The collection of formulas $\varphi(x;y)$ which have a strong honest definition is in general only closed under \emph{positive} boolean combinations.
\end{remark}

\begin{lemma}
\label{equivdistaldefs}
The following are equivalent:
\begin{enumerate}
\item $T$ is distal;
\item every $\varphi(x;y)\in\L$ is distal.
\end{enumerate}
\end{lemma}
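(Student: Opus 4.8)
The plan is to prove the equivalence $\text{(1)}\Leftrightarrow\text{(2)}$ by unwinding the definitions, with the substantive content being that distality can be checked one formula at a time. The direction $\text{(1)}\Rightarrow\text{(2)}$ is straightforward: if $T$ is distal and $\varphi(x;y)\in\L$, take $b\in\M_y$, an indiscernible sequence $(a_i)_{i\in I}$ with $I = I_1+(c)+I_2$ and $I_1,I_2$ infinite, such that $(a_i)_{i\in I_1+I_2}$ is $b$-indiscernible. Applying Definition~\ref{distaldef} with $A = b$ (more precisely, $A$ the finite set enumerated by $b$), we conclude $(a_i)_{i\in I}$ is $b$-indiscernible, and in particular $\models\varphi(a_c;b)\leftrightarrow\varphi(a_i;b)$ for every $i\in I$, so $\varphi$ is distal.

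The reverse direction $\text{(2)}\Rightarrow\text{(1)}$ is where the work lies. Suppose every $\L$-formula is distal. Fix $A\subseteq\M$, a multivariable $x$, and an indiscernible sequence $(a_i)_{i\in I}$ from $\M_x$ with $I = I_1+(c)+I_2$, $I_1,I_2$ infinite, and $(a_i)_{i\in I_1+I_2}$ being $A$-indiscernible. We must show $(a_i)_{i\in I}$ is $A$-indiscernible; that is, for any $n$, any increasing tuples $i_1<\cdots<i_n$ and $j_1<\cdots<j_n$ from $I$, and any $\L$-formula $\psi(x_1,\ldots,x_n)$ possibly with parameters from $A$, we have $\models\psi(a_{i_1},\ldots,a_{i_n})\leftrightarrow\psi(a_{j_1},\ldots,a_{j_n})$. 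First I would absorb the parameters: a formula with parameters $\bar a$ from $A$ is $\chi(x_1,\ldots,x_n;y)$ evaluated at $y = \bar a$, so it suffices to treat the case where $\psi(x_1,\ldots,x_n;y)\in\L$ is parameter-free and we compare the two $n$-tuples at a fixed parameter $\bar a\in\M_y$ enumerating (a finite part of) $A$.

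The key reduction is to pass from an arbitrary formula $\psi$ in the concatenated variable $(x_1,\ldots,x_n)$ to the formula $\varphi(x;y)$ where $x = (x_1,\ldots,x_n)$, noting that $(a_i)_{i\in I}$ being indiscernible means the blown-up sequence $(a_i)$ still gives a well-defined indiscernible sequence in the variable $x = x_1\cdots x_n$ via grouping consecutive terms — but this needs care because inserting $a_c$ changes which terms get grouped. The cleaner route, which I would follow, is the standard one: it suffices to show that for every $\L$-formula $\varphi(x;y)$ with $x$ a single copy of the home variable and every $\bar a\in\M_y$, the truth value of $\varphi(a_i;\bar a)$ is constant as $i$ ranges over $I$, and then bootstrap to $n$-tuples by an induction that repeatedly applies the single-formula distality, moving one coordinate at a time past the cut using the hypothesis that the complementary sequence (with $a_c$ removed) is $\bar a$-indiscernible. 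Concretely, to compare $\psi(a_{i_1},\ldots,a_{i_n};\bar a)$ with $\psi(a_{j_1},\ldots,a_{j_n};\bar a)$ when at most one index equals $c$, say $i_k = c$: fix the other coordinates and view $\psi$ as $\varphi(x_k; z)$ with $z$ absorbing $\bar a$ together with $a_{i_1},\ldots,\widehat{a_{i_k}},\ldots,a_{i_n}$; since those other coordinates come from $I_1+I_2$ where the sequence is $\bar a$-indiscernible, the parameter $z$-value is ``indiscernible-generic'', and distality of $\varphi$ gives that replacing $a_c$ by the appropriate neighboring term $a_{i'}$ with $i'\in I_1+I_2$ does not change the truth value; then move to $\bar a\cup(I_1+I_2)$-indiscernibility to finish. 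The main obstacle is precisely this bookkeeping: handling $n$-tuples straddling the cut with only the single-variable formula-distality hypothesis requires carefully setting up an induction (or invoking that an indiscernible sequence remains indiscernible after grouping consecutive blocks, applied to the sequence with the special element tracked) so that at each step exactly one coordinate is pushed across $c$ while the rest lie in the $A$-indiscernible part. I expect to phrase this via the observation that for a tuple $i_1<\cdots<i_n$, either none of the $i_k$ equals $c$ (and $A$-indiscernibility of $(a_i)_{i\in I_1+I_2}$ applies directly after noting all terms lie in $I_1+I_2$), or exactly one does, reducing to the single-variable case as above — and the $j$-tuple is handled identically, with transitivity of ``having the same type'' closing the argument.
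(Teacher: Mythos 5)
Your direction (1)~$\Rightarrow$~(2) is correct and complete (the paper itself states Lemma~\ref{equivdistaldefs} without proof, as a standard fact, so there is no textual proof to compare against). The direction (2)~$\Rightarrow$~(1) is where the content lies, and your sketch leaves the genuine difficulty unresolved at exactly the point you call ``bookkeeping.'' When you absorb the fixed coordinates $a_{i_1},\ldots,a_{i_{k-1}},a_{i_{k+1}},\ldots,a_{i_n}$ into the parameter $z$, Definition~\ref{distaldefformula} requires the punctured sequence to be indiscernible over $\bar a$ \emph{together with} these absorbed terms. That is false for the whole punctured sequence: an indiscernible sequence is in general not indiscernible over its own terms. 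What is true is that the convex segment of the sequence strictly between $i_{k-1}$ and $i_{k+1}$ is indiscernible over $\bar a$ and the outer terms; your phrase ``the parameter $z$-value is indiscernible-generic'' is not a substitute for making this restriction, and you never make it.

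Even after restricting to that convex segment, Definition~\ref{distaldefformula} demands infinitely many indices on each side of $c$, and nothing in the hypotheses prevents $i_{k-1}=\max I_1$ and $i_{k+1}=\min I_2$, in which case the only index strictly between them is $c$ itself and there is no ``appropriate neighboring term $a_{i'}$ with $i'\in I_1+I_2$'' at all. Handling this configuration is the real content of the lemma: one must either first extend the sequence, inserting infinitely many new terms between $I_1$ and $c$ and between $c$ and $I_2$ while preserving \emph{both} the $\emptyset$-indiscernibility of the full sequence and the $A$-indiscernibility of the punctured one (this needs an argument; the usual EM-type plus automorphism trick controls only one of the two at a time), or run a blocking argument in which the terms of an auxiliary indiscernible sequence of tuples absorb the outer coordinates --- and the naive ``grouping consecutive blocks'' you allude to does not work as stated, because the problematic tuple straddles the cut, and blocks that literally share coordinates destroy indiscernibility of the blocked sequence. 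Your proposed induction ``moving one coordinate at a time past the cut'' does not address this either: the obstruction is the lack of room at the cut, not the number of coordinates. As written, the proposal establishes (2)~$\Rightarrow$~(1) only under the extra assumption that between any element of $I_1$ and $c$, and between $c$ and any element of $I_2$, there are infinitely many further indices (e.g.\ for dense $I_1,I_2$ without endpoints); the reduction of the general case to that one is the missing step.
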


\begin{example}\label{o-mindist}
All o-minimal theories are distal; see~\cite[Lemma 2.10]{Distal}. In particular, the theory of ordered vector spaces over an ordered field $C$ is distal.
\end{example}

\subsection*{A distal criterion}
To set the stage for Distal Criterion~\ref{distal_multi} below, we now consider an extension $\L(\mathfrak{f}):=\ \L\cup\{\mathfrak{f}\}$ of the language $\L$ by a new unary function symbol $\frak{f}$ involving sorts which are already present in $\L$. We also consider $T(\mathfrak{f})$, a complete $\L(\mathfrak{f})$-theory extending $T$. Given a model $\bm{M}\models T$ we denote by $(\bm{M},\mathfrak{f})$ an expansion of $\bm{M}$ to a model of $T(\mathfrak{f})$.
For a subset $X$ of a model $\bm{M}$, we let $\langle X \rangle$ denote the $\L$-substructure of $\bm{M}$ generated by $X$. If $\bm{M}$ is a submodel of $\bm{N}$, we let $\bm{M}\langle X \rangle$ denote $\langle M \cup X \rangle\subseteq\bm{N}$. For this subsection we also fix a monster model $\M$ of $T(\mathfrak{f})$. Note that $\M\!\upharpoonright\!\L$ is then a monster model of $T$.

\medskip\noindent
Distal Criterion~\ref{distal_multi} is a many-sorted generalization of~\cite[2.1]{Hieronymi_Nell}. The version we give below is a consequence of~\cite[2.6]{GehretKaplan} \emph{In the statement of~\ref{distal_multi},  $x,x',y,z$  are variables.}

\begin{distalcriterion}
\label{distal_multi}
Suppose $T$ is a distal theory and the following conditions hold:
\begin{enumerate}
\item The theory $T(\mathfrak{f})$ has quantifier elimination.
\item For every model $(\bm{N},\mathfrak{f}) \models T(\mathfrak{f})$, every  substructure $\bm{M} \subseteq \bm{N}$ such that $\frak{f}(M) \subseteq M$, every $x$, and every $c \in N_x$, there is a $y$ and $d \in \frak{f}\big(\bm{M}\langle c\rangle\big)_y$ such that
\[
\frak{f}\big(\bm{M}\langle c\rangle\big)  \subseteq \big\langle \frak{f}(M),d \big\rangle.
\]
\item Suppose that $x'$ is an initial segment of $x$,  $g,h$ are $\L$-terms of arities $xy$ and $x'z$ respectively, $b_1\in \M_y$, and $b_2\in\frak{f}(\M)_z$. If $(a_i)_{i\in I}$ is an indiscernible sequence from $\frak{f}(\M)_{x'}\times \M_{x\setminus x'}$ such that
\begin{enumerate}
\item $I = I_1+(c)+I_2$ where $I_1$ and $I_2$ are infinite, and $(a_i)_{i\in I_1+I_2}$ is $b_1b_2$-indiscernible, and
\item $\frak{f}\big(g(a_i,b_1)\big) = h(a_i,b_2)$ for every $i\in I_1+I_2$, 
\end{enumerate}
then $\frak{f}\big(g(a_c,b_1)\big) = h(a_c,b_2)$.
\end{enumerate}
Then $T(\mathfrak{f})$ is distal.
\end{distalcriterion}

\section{$I$-ordered $C$-vector spaces}
\label{IorderedCvectorspaces}

\noindent
\emph{In this section $C$ is an ordered field and $I$ is a nonempty index set.} Later we will consider the case where $|I|=2$, but the presentation would not simplify much if we were to restrict ourselves to that special case.

\begin{definition}
An \textbf{$I$-ordered $C$-vector space} is a $C$-vector space $G$ equipped with an $I$-ordering $(<_i)_{i\in I}$ such that $G$ is an ordered $C$-vector space with respect to each $<_i$. In other words, $G$ is a vector space over $C$, and for every $\lambda\in C$, $x\in G$ and $i\in I$:
\[
\lambda>0 \ \& \ x>_i0 \quad \Rightarrow \quad \lambda x>_i0.
\]
\end{definition}

\medskip\noindent
Let $G,G'$ be two $I$-ordered $C$-vector spaces. We call $G$ an \textbf{$I$-ordered $C$-subspace} of $G'$, or $G'$ an \textbf{extension} of $G$, if, as $C$-vector spaces, $G$ is a subspace of $G'$ and the $I$-ordering on $G$ agrees with the induced $I$-ordering from $G'$ (notation: $G\subseteq G'$). An \textbf{embedding} of $I$-ordered $C$-vector spaces is an embedding $j:G\to G'$ of $C$-vector spaces such that for all $x\in G$ and $i\in I$, if $x>_i0$ in $G$, then $j(x)>_i0$ in $G'$.

\begin{lemma}\label{I-ordered embedding}
Let $G$ be an $I$-ordered $C$-vector space and $\big((P_i,Q_i)\big)_{i\in I}$ a polycut in $G$. Then there is an $I$-ordered $C$-vector space $G'$ extending $G$ and an element $b\in G'$ such that
\begin{enumerate}
\item for any $I$-ordered $C$-vector space extension $G^*$ of $G$ and an element $b^*\in G^*$ which realizes the polycut $\big((P_i,Q_i)\big)_{i\in I}$, there is an embedding $G'\to G^*$ over $G$ of $I$-ordered $C$-vector spaces which sends $b$ to $b^*$.
\end{enumerate}
Furthermore, given any $I$-ordered $C$-vector space $G'$ extending $G$ and $b\in G'$ satisfying (1) above, we have
\begin{enumerate}
\setcounter{enumi}{1}
\item $b$ realizes the polycut $\big((P_i,Q_i)\big)_{i\in I}$, and
\item $G' = G\oplus Cb$ (internal direct sum of $C$-vector spaces).
\end{enumerate}
\end{lemma}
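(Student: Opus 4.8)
The plan is to build $G'$ directly as the $C$-vector space $G \oplus Cb$ (with $b$ a brand-new symbol), and then define an $I$-ordering on it that makes $b$ realize the given polycut and that is forced by the universal property. Concretely, a general nonzero element of $G'$ has the form $x + \lambda b$ with $x \in G$, $\lambda \in C$. For each $i \in I$ I would declare $x + \lambda b >_i 0$ according to the following recipe: if $\lambda = 0$, use the ordering of $G$; if $\lambda > 0$, set $x + \lambda b >_i 0 \iff -\lambda^{-1}x \in P_i$ (i.e.\ $b$ should lie strictly above $-\lambda^{-1}x$ in the $i$-th order, which is exactly the polycut condition $-\lambda^{-1}x \in P_i$); and if $\lambda < 0$, set $x + \lambda b >_i 0 \iff -\lambda^{-1}x \in Q_i$. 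One checks this is well-defined and, for each fixed $i$, defines a total order compatible with the $C$-vector space structure — this is the routine verification, and the scaling axiom $\mu > 0 \ \& \ y >_i 0 \Rightarrow \mu y >_i 0$ follows since multiplying $x + \lambda b$ by $\mu > 0$ replaces $(x,\lambda)$ by $(\mu x, \mu\lambda)$ and leaves $-\lambda^{-1}x$ unchanged. That $b$ realizes the polycut is immediate: taking $x \in P_i$ and $\lambda = -1$ gives $x - b <_i 0$, i.e.\ $x <_i b$, and similarly $Q_i >_i b$.

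Next I would verify the universal property (1). Suppose $G^*$ extends $G$ and $b^* \in G^*$ realizes the polycut. Since $b^* \notin G$ (it realizes a polycut, so it is strictly between $P_i$ and $Q_i = G \setminus P_i$ in $<_i$, hence not equal to any element of $G$), the sum $G + Cb^*$ inside $G^*$ is direct, so there is a well-defined $C$-linear map $j: G' \to G^*$ fixing $G$ and sending $b \mapsto b^*$, and it is injective. It remains to check $j$ is an embedding of $I$-ordered $C$-vector spaces, i.e.\ preserves each $>_i$. If $x + \lambda b >_i 0$ in $G'$ with $\lambda > 0$, then $-\lambda^{-1}x \in P_i$, so $-\lambda^{-1}x <_i b^*$ in $G^*$, hence $x + \lambda b^* >_i 0$ since $\lambda > 0$; the cases $\lambda < 0$ and $\lambda = 0$ are analogous (the latter being trivial). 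So $j$ is the required embedding over $G$ with $j(b) = b^*$.

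Finally, for the "furthermore" clause, suppose $G'$ is any $I$-ordered $C$-vector space extending $G$ with an element $b$ satisfying (1). For (2), I would apply (1) with $G^*$ the specific extension constructed above and $b^* = b$ there (which realizes the polycut by the paragraph above): this yields an embedding $\iota: G' \to G^*$ over $G$ with $\iota(b) = b^*$. In particular $\iota(b) = b^*$ realizes the polycut in $G^*$; since embeddings of $I$-ordered $C$-vector spaces reflect strict inequalities with elements of $G$ (injectivity plus order-preservation in both directions on the image, using totality), $b$ realizes the polycut in $G'$. For (3), note $\iota$ embeds $G'$ into $G^* = G \oplus Cb^*$ over $G$ with $b \mapsto b^*$, so $\iota(G') \supseteq G \oplus Cb^* = G^*$, hence $\iota$ is a bijection and $G' = G \oplus Cb$.

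The main obstacle I anticipate is purely the bookkeeping in the first paragraph: checking that the piecewise definition of $>_i$ really is a single total order compatible with addition (transitivity across the $\lambda > 0$ / $\lambda < 0$ / $\lambda = 0$ cases is the fiddly part), and that it does not depend on the representation — though since $G \oplus Cb$ is a genuine internal direct sum, representations are unique, so this last point is automatic. Everything after the construction is formal nonsense from the universal property.
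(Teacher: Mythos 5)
Your proposal is correct and takes essentially the same route as the paper: the paper also sets $G' := G\oplus Cb$ and extends each $<_i$ to the unique ordered $C$-vector space ordering on $G'$ with $P_i<_ib<_iQ_i$, delegating the existence/uniqueness of that one-ordering cut extension and its universal property to a cited result (ADAMTT 2.4.16), whereas you spell out that ordering and the verification of (1) explicitly. Your arguments for (2) and (3), which the paper dismisses as clear, are also fine.
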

\begin{proof}
As a $C$-vector space, let $G' := G\oplus Cb$. For each $i\in I$, extend $<_i$ to the unique ordered $C$-vector space ordering on $G'$ such that $P_i<_ib<_iQ_i$. The universal property then follows by the universal property in~\cite[2.4.16]{ADAMTT}. (2) and (3) are also clear.
\end{proof}

\begin{definition}
We say that the $I$-ordering $(<_i)_{i\in I}$ on an $I$-ordered $C$-vector space $G$ is \textbf{independent} if for every $n$, distinct $i_1,\ldots,i_n\in I$, and for all pairs $a_1,b_1,\ldots,a_n,b_n\in G_{\pm\infty}$, if $a_k<_{i_k}b_k$ for $k=1,\ldots,n$, then
\[
(a_1,b_1)_{i_1}\ \cap \ \cdots \ \cap \ (a_n,b_n)_{i_n} \quad\text{is nonempty.}
\]
\end{definition}

\noindent
Let $\mathcal{L}_{C,I}$ be the natural language of $I$-ordered $C$-vector spaces:
\[
\mathcal{L}_{C,I} \ = \ \big\{0,+,(\lambda_c)_{c\in C}\big\} \cup \{<_i:i\in I\}.
\]
%Let $T_0$ be the $\mathcal{L}_{C,I}$-theory whose models are precisely the $I$-ordered $C$-vector spaces. Note that $T_0$ has a universal axiomatization, but in general is incomplete.
\noindent
Let $T_{C,I}$ be the $\mathcal{L}_{C,I}$-theory whose models are precisely the $I$-ordered $C$-vector spaces $G$ such that the orderings $(<_i)_{i\in I}$ on $G$ are independent.  By applying Lemma~\ref{I-ordered embedding} iteratively starting with the trivial $I$-ordered $C$-vector space with underlying set $\{0\}$, we can construct a model of $T_{C,I}$ and thus $T_{C,I}$ is consistent. Note that since $I$ is nonempty, models of $T_{C,I}$ are necessarily infinite.

\begin{prop}
\label{TCIQE}
The $\mathcal L_{C,I}$-theory $T_{C,I}$ admits quantifier elimination and is complete.
\end{prop}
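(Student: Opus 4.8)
The plan is to prove quantifier elimination via the standard back-and-forth criterion: it suffices to show that if $G, G'$ are models of $T_{C,I}$ with $G'$ $|G|^+$-saturated, $A$ is a finitely generated $\mathcal{L}_{C,I}$-substructure of $G$, and $\iota\colon A \to G'$ is an embedding, then for any $a \in G$ the map $\iota$ extends to an embedding of $\langle A, a\rangle$ into $G'$. Since models of $T_{C,I}$ are $C$-vector spaces with extra relations, and $\mathcal{L}_{C,I}$ has no function symbols beyond $+$ and the scalars $\lambda_c$, the substructure $A$ is simply a finite-dimensional $C$-subspace of $G$ equipped with the induced $I$-ordering, and likewise $\langle A,a\rangle = A \oplus Ca$ if $a\notin A$ (if $a\in A$ there is nothing to do).

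**Key steps.**
First, I would observe that the quantifier-free type of $a$ over $A$ is entirely determined by the polycut $\big((P_i, Q_i)\big)_{i\in I}$ it induces on $A$, where $P_i := \{b\in A : b <_i a\}$ and $Q_i := A\setminus P_i$: indeed any atomic formula with parameters in $A$ satisfied by $a$ reduces, using the $C$-vector space structure, to an inequality of the form $b <_i a$, $a <_i b$, or $a = b$ for some $b \in A$ (after clearing the coefficient of $a$, which is a unit in $C$). Second, I would use Lemma~\ref{I-ordered embedding} to form the prime extension $A' = A \oplus Cb$ of $A$ realizing this polycut, together with its universal property. Third — and this is where the independence hypothesis enters — I must produce an element $b^* \in G'$ realizing the image polycut $\big((\iota(P_i), \iota(Q_i))\big)_{i\in I}$ in $\iota(A) \subseteq G'$; then the universal property of Lemma~\ref{I-ordered embedding} gives an embedding $A' \to G'$ over $\iota(A)$ sending $b$ to $b^*$, which composed with $A \hookrightarrow A'$, $a \mapsto b$ (valid by part (1) of the Lemma applied with $G^* = G$) yields the desired extension of $\iota$. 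To find $b^*$: for each finite $I_0 \subseteq I$ and each choice of finitely many parameters, independence of the $I$-ordering on $G'$ guarantees that finitely many of the constraints $\iota(P_i) <_i x <_i \iota(Q_i)$ (over $i \in I_0$) are jointly satisfiable in $G'$; by $|G|^+$-saturation of $G'$ — noting $|A| \leq |C| + |G|$ and we need only saturation for the relevant number of parameters, which a monster model certainly provides — the full set of constraints over all $i \in I$ is realized by some $b^* \in G'$.

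**Completeness** then follows immediately: the trivial $C$-vector space $\{0\}$ embeds into every model of $T_{C,I}$, so by quantifier elimination all models are elementarily equivalent.

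**Main obstacle.**
The only real subtlety is the final step: checking that the independence condition is exactly what is needed to realize the image polycut in $G'$ by compactness/saturation, and in particular being careful that when $\iota(P_i)$ or $\iota(Q_i)$ is empty or unbounded the constraint is handled correctly (this is why $\pm\infty$ were adjoined and why the definition of independence quantifies over $G_{\pm\infty}$). Everything else — that substructures are subspaces, that quantifier-free types reduce to polycuts, that the universal property transfers — is routine once the setup is in place.
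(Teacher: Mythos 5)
Your proposal is correct and follows essentially the same route as the paper: reduce to extending an embedding by a single element, note that the quantifier-free type is captured by the polycut, realize the image polycut in the saturated model using independence, and invoke the universal property of Lemma~\ref{I-ordered embedding}, with completeness from the trivial subspace. The only cosmetic difference is that you work with finitely generated substructures while the paper works with an arbitrary proper substructure $H$ and $|H|^+$-saturation; both are standard forms of the same test.
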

\begin{proof}
Let $G$ and $G^*$ be models of $T_{C,I}$ and suppose $H\subseteq G$ is a proper $\mathcal{L}_{C,I}$-substructure of $G$.
Furthermore, suppose $G^*$ is $|H|^+$-saturated and $i:H\to G^*$ is an embedding of $\mathcal{L}_{C,I}$-structures. For quantifier elimination, it suffices to find $b\in G\setminus H$ such that $i$ extends to an embedding $H+Cb\to G^*$ (e.g., see~\cite[B.11.10]{ADAMTT}).

Take $b\in G\setminus H$ and let $\big((P_i,Q_i)\big)_{i\in I}$ be the unique polycut in $H$ realized by $b$. Then the image $\big(i(P_i),i(Q_i)\big)_{i\in I}$ determines a partial type in $G^*$ over the parameter set $i(H)$:
\[
i(P_i)<x<i(Q_i) \quad\text{for every $i\in I$.}
\]
As the orderings on $G^*$ are independent and $G^*$ is $|H|^+$-saturated, we may take $b^*\in G^*$ realizing this partial type. Then by Lemma~\ref{I-ordered embedding}, there is an embedding $H+Cb\to G^*$ which extends $i$ and sends $b$ to $b^*$.

%
% $G^*$ is $|G|^+$-saturated, and $H\subseteq G$ is a proper $I$-ordered $C$-subspace of $G$ such that there is an embedding $\iota:H\to G^*$. By 
%
%Consider $\bm M \models T_1$, $S\subseteq M$, and an $\mathcal{L}_{C,I}$-embedding $\iota:S \to N$ where $\bm N \models T_1$ is $|M|^+$-saturated. It suffices to show that $\iota$ extends to an $\mathcal{L}_{C,I}$-embedding of $\bm M$ into $\bm N$. 
%
%Let $a \in M \setminus S$. Now $a$ determines a polycut $\big(P_i,Q_i\big)_{i \in I}$ in $S$ by $P_i<a<Q_i$ for each $i \in I$. Consider then $\big(\iota(P_i),\iota(Q_i)\big)_{i\in I}$. Then $\iota(P_i)<x<\iota(Q_i)$ gives a partial type over $\iota(S)$. As the orderings on $N$ are independent, by $|M|^+$-saturation, there is $a' \in N$ realizing this partial type. Therefore by Lemma \ref{I-ordered embedding}, there is $\iota':S \oplus Ca \to N$ extending $\iota$. Iterating this, we get an $\mathcal{L}_{C,I}$-embedding of $\bm M$ into $\bm N$. Therefore $T_1$ admits quantifier elimination.

Completeness follows from quantifier elimination and the fact that the trivial $I$-ordered $C$-vector space with a single element embeds into every model of $T_{C,I}$
\end{proof}

\begin{cor}
\label{TCIdistal}
$T_{C,I}$ is distal.
\end{cor}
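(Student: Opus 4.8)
The plan is to deduce \ref{TCIdistal} from \ref{TCIQE} together with \ref{o-mindist} via a reduct argument. The key observation is that an $I$-ordered $C$-vector space is, for each fixed $i \in I$, simply an ordered $C$-vector space with respect to $<_i$, and the theory of ordered $C$-vector spaces is $o$-minimal, hence distal. So first I would set $T_{C,i}$ to be the restriction of $T_{C,I}$ to the sublanguage $\mathcal{L}_{C,i} := \{0,+,(\lambda_c)_{c\in C}\} \cup \{<_i\}$; by \ref{o-mindist} each such reduct is distal.

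Next, by \ref{equivdistaldefs} it suffices to show that every $\mathcal{L}_{C,I}$-formula $\varphi(x;y)$ is distal. By \ref{TCIQE}, $T_{C,I}$ has quantifier elimination, so $\varphi(x;y)$ is equivalent modulo $T_{C,I}$ to a boolean combination of atomic $\mathcal{L}_{C,I}$-formulas. Since the class of distal formulas in fixed variables $(x;y)$ is closed under boolean combinations (by the Remark following \ref{distaldefformula}), it is enough to check that each atomic $\mathcal{L}_{C,I}$-formula is distal. An atomic formula is either an equation $t_1 = t_2$ or an inequality $t_1 <_i t_2$ for a single $i \in I$, where $t_1, t_2$ are $\mathcal{L}_{C,I}$-terms (which, since the function symbols of $\mathcal{L}_{C,I}$ are just $+$ and the scalars $\lambda_c$, are terms of any of the sublanguages $\mathcal{L}_{C,i}$). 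Thus each atomic formula actually lives in a single reduct $\mathcal{L}_{C,i}$.

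The point is then that distality of a formula is a reduct-stable notion in the right direction: if $\varphi(x;y)$ is an $\mathcal{L}_{C,i}$-formula and $(a_j)_{j \in J}$ is an $\mathcal{L}_{C,I}$-indiscernible sequence over $\emptyset$ satisfying the hypotheses of \ref{distaldefformula} with parameter $b$, then in particular $(a_j)_{j\in J}$ is $\mathcal{L}_{C,i}$-indiscernible and $(a_j)_{j \in J_1 + J_2}$ is $\mathcal{L}_{C,i}$-indiscernible over $b$; since the reduct $T_{C,i}$ is distal (hence $\varphi$ is distal as an $\mathcal{L}_{C,i}$-formula by \ref{equivdistaldefs}), we get $\models \varphi(a_c;b) \leftrightarrow \varphi(a_j;b)$ for all $j$. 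Hence $\varphi$ is distal as an $\mathcal{L}_{C,I}$-formula as well. Assembling: every atomic $\mathcal{L}_{C,I}$-formula is distal, boolean combinations of distal formulas are distal, quantifier elimination reduces the general case to boolean combinations of atomics, and \ref{equivdistaldefs} then gives that $T_{C,I}$ is distal.

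The only mild subtlety — and the one place to be a little careful — is the direction of the reduct argument: passing to a reduct preserves indiscernibility (an $\mathcal{L}_{C,I}$-indiscernible sequence is $\mathcal{L}_{C,i}$-indiscernible), which is exactly what is needed to verify the hypotheses of \ref{distaldefformula} in the reduct; the conclusion of \ref{distaldefformula} is about satisfaction of a single $\mathcal{L}_{C,i}$-formula and so transfers back to the expansion verbatim. No genuine obstacle arises; the argument is essentially bookkeeping around \ref{TCIQE}, \ref{equivdistaldefs}, and the boolean-closure Remark.
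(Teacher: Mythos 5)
Your proposal is correct and follows essentially the same route as the paper: quantifier elimination plus Lemma~\ref{equivdistaldefs} reduce the problem to quantifier-free formulas, each atomic (or negated atomic) formula lives in the single-order reduct $\{0,+,(\lambda_c)_{c\in C},<_i\}$, which is an ordered $C$-vector space and hence distal by Example~\ref{o-mindist}, and boolean closure of distal formulas finishes the argument. Your extra care about the direction of the reduct argument (indiscernibility passing down to the reduct, satisfaction of the single formula transferring back) is exactly the point the paper leaves implicit.
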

\begin{proof}
By Proposition~\ref{TCIQE} and Lemma~\ref{equivdistaldefs}, it suffices to show each quantifier-free $\mathcal{L}_{C,I}$-formula $\varphi(x;y)$ is distal. Every atomic formula and negated atomic formula is in a reduct to $\big\{0,+,(\lambda_c)_{c \in C},<_{i}\!\big\}$ for some $i \in I$. Every such reduct is an ordered $C$-vector space, and hence by Example \ref{o-mindist} is distal. Therefore, each formula $\varphi(x;y)$ is equivalent to a boolean combination of distal formulas, hence is distal.
\end{proof}

\section{Hamel spaces}\label{Hamel spaces}
\noindent
\emph{Throughout this section, $C$ is an ordered field.}

\subsection{Hamel valuations} Consider a $2$-ordered $C$-vector space $G$. We denote the two orderings on $G$ by $<_0$ and $<_1$, thinking of $<_0$ as the ``original'' ordering, and $<_1$ as the ``auxiliary'' ordering. 

\medskip\noindent
A \textbf{Hamel valuation} on $G$ is a map $v:G\to G_{\infty}$ which satisfies the following:
\begin{enumerate}
\item $v:G\to G_{\infty}$ is a (non-surjective) convex valuation which makes $G$ a valued vector space over $C$ with respect to the ordering $<_1$ on the vector space and the ordering $<_0$ on the value set, i.e., for all $x,y\in G$ and $\lambda\in C^{\times}$:
\begin{enumerate}
\item $v(x)=\infty$ iff $x=0$;
\item $v(x+y)\geq_0 \min_{0}\big(v(x),v(y)\big)$;
\item $v(\lambda x) = v(x)$; and
\item if $0<_1x<_1y$, then $vx\geq_0 vy$.
\end{enumerate}
\item (Idempotence) $vx = v(vx)$.
\item (Positivity) $vx >_1 0$.
\end{enumerate}

\noindent
A \textbf{Hamel space (over $C$)} is a pair $(G,v)$ where $G$ is a $2$-ordered $C$-vector space, and $v$ is a Hamel valuation on $G$. 

\begin{definition}
Let $(G,v)$ be a Hamel space. We say that $(G,v)$ is \textbf{independent} if the orderings $<_0$ and $<_1$ on $G$ are independent. We say that $(G,v)$ is \textbf{dense} if for every $a,b\in G$, if $a<_0b$, then there is $c\in G$ such that $a<_0vc<_0b$.
\end{definition}

\noindent
Given an independent Hamel space $(G,v)$, the value set $v(G)$ will not be dense in $(G,<_1)$. Indeed, given $x\in G^{\neq}$, we have $v(G)\cap (vx,2vx)_1 = \emptyset$. However, the set $G\setminus v(G)$ is dense in both orderings:

\begin{lemma}
Suppose $(G,v)$ is an independent Hamel space. Then for every $x_0,y_0,x_1,y_1\in G$ such that $x_0<_0y_0$ and $x_1<_1y_1$, there is $z\in G$ such that $z\neq vz$ and $x_0<_0z<_0y_0$ and $x_1<_1z<_1y_1$. 
\end{lemma}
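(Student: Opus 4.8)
The plan is to use independence to produce a candidate element in the open box $(x_0,y_0)_0\cap(x_1,y_1)_1$, and then, if that candidate happens to lie in the value set $v(G)$, to push it into an interval $(vw,2vw)_1$ where, by the remark preceding the lemma, $v(G)$ has no points at all.

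First I would apply independence of $<_0$ and $<_1$ (with $n=2$, indices $0$ and $1$, and the given endpoints) to get some $w\in U:=(x_0,y_0)_0\cap(x_1,y_1)_1$. If $w\neq vw$ I am done, taking $z:=w$, so assume $w=vw$. Then $w\neq 0$ (otherwise $vw=v(0)=\infty\neq w$), so by the positivity axiom $w=vw>_1 0$, and therefore $w<_1 w+w=2w$ since $<_1$ is a group ordering. Now I want a \emph{second} element of $U$ that in addition lies in $(w,2w)_1$. Since $w\in U$, we have $x_1<_1 w$; setting $y_1':=\min_1\{y_1,2w\}$ we get $w<_1 y_1'$ (from $w<_1 y_1$ and $w<_1 2w$) and $(w,y_1')_1\subseteq(x_1,y_1)_1\cap(w,2w)_1$. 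A second application of independence, now to the $<_0$-interval $(x_0,y_0)_0$ and the $<_1$-interval $(w,y_1')_1$ (again distinct indices), yields $z\in(x_0,y_0)_0\cap(w,y_1')_1$. This $z$ lies in $(x_0,y_0)_0$, in $(x_1,y_1)_1$, and in $(w,2w)_1$.

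It remains to check $z\neq vz$. Since $w=vw$ and $w\in G^{\neq}$, the remark preceding the lemma gives $v(G)\cap(w,2w)_1=v(G)\cap(vw,2vw)_1=\emptyset$; as $z\in(w,2w)_1$, we get $z\notin v(G)$, and in particular $z\neq vz$ (any $z$ with $z=vz$ would belong to the image of $v$).

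I do not expect a real obstacle: the only substantive ingredient is the fact — quoted just before the lemma — that the value set has a $<_1$-gap immediately above each of its elements, and everything else is two applications of independence together with elementary bookkeeping in the ordered group $(G,<_1)$. The one subtlety is that independence applies only to families of intervals indexed by pairwise distinct $i\in I$, so the two $<_1$-conditions (the original $(x_1,y_1)_1$ and the auxiliary $(w,2w)_1$) must first be merged into the single $<_1$-interval $(w,y_1')_1$ before independence is used.
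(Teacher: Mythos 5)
Your proof is correct and follows essentially the same route as the paper's: one application of independence, and if the resulting point $w$ lies on the value set, a second application targeted at the interval from $w$ to $\min_1(2w,y_1)$ in the $<_1$-order. The only cosmetic difference is the final step: the paper deduces $vz=vw\neq z$ directly from convexity of $v$ with respect to $<_1$, whereas you invoke the gap $v(G)\cap(vw,2vw)_1=\emptyset$ stated just before the lemma, which rests on the same convexity argument.
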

\begin{proof}
By independence, there is $z'\in G$ such that $x_0<_0z'<_0y_0$ and $x_1<_1z'<_1y_1$. If $z'\neq vz'$, then $z:= z'$ will work. Otherwise, necessarily $z'=vz'>_10$. Applying independence again, we get $z\in G$ such that $z'<_0z<_0y_0$ and $z'<_1z<_1\min_{1}(2z', y_1)$. By convexity, we have $vz = vz'\neq z$, so this $z$ works.
\end{proof}

\noindent
In general, the set $v(G^{\neq})$ in a Hamel space $(G,v)$ will not span $G$ as a $C$-vector space. However, $v(G^{\neq})$ will always be $C$-linearly independent:

\begin{lemma}
\label{vGindependent}
Suppose $(G,v)$ is a Hamel space. Then $v(G^{\neq})$ is $C$-linearly independent.
\end{lemma}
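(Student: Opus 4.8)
The plan is to derive a contradiction with axiom (1a) by computing the valuation of a hypothetical nontrivial linear dependence. First I would record the usual sharpening of the ultrametric inequality (1b): for $x,y\in G$ with $vx\neq vy$ one has $v(x+y)=\min_0(vx,vy)$. Indeed, say $vx<_0 vy$; then (1b) gives $v(x+y)\geq_0 vx$, and if this were strict, then applying (1b) to the identity $x=(x+y)+(-y)$, together with $v(-y)=v(y)$ (axiom (1c), since $-1\in C^\times$), would yield $vx\geq_0\min_0(v(x+y),vy)>_0 vx$, which is absurd. A routine induction on $m$ then shows that whenever $t_1,\dots,t_m\in G$ have pairwise distinct valuations, $v(t_1+\cdots+t_m)=\min_0(vt_1,\dots,vt_m)$; in the inductive step one reorders so that the (necessarily unique) $<_0$-smallest valuation is attained at $t_1$ and peels off $t_1$ using the two-term case.

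Next I would exploit idempotence. If $h\in v(G^{\neq})$, write $h=vx$ with $x\in G^{\neq}$; then $vh=v(vx)=vx=h$ by axiom (2), so every element of $v(G^{\neq})$ is a fixed point of $v$, and $h\neq\infty$ by (1a). Together with (1c) this gives $v(\lambda h)=v(h)=h$ for all $\lambda\in C^\times$.

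Now suppose, toward a contradiction, that $v(G^{\neq})$ is not $C$-linearly independent: fix pairwise distinct $h_1,\dots,h_n\in v(G^{\neq})$ and $\lambda_1,\dots,\lambda_n\in C^\times$ with $\lambda_1 h_1+\cdots+\lambda_n h_n=0$. Setting $t_j:=\lambda_j h_j$, the previous paragraph gives $vt_j=h_j$, so the valuations $vt_1,\dots,vt_n$ are pairwise distinct. The iterated ultrametric equality then yields $v(t_1+\cdots+t_n)=\min_0(h_1,\dots,h_n)\in G$, and in particular this valuation is not $\infty$. But $t_1+\cdots+t_n=0$, so $v(t_1+\cdots+t_n)=v(0)=\infty$ by (1a), a contradiction. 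Hence $v(G^{\neq})$ is $C$-linearly independent.

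I do not anticipate a genuine obstacle here: the only slightly delicate points are extracting the strict ultrametric equality from the weak inequality (1b) and the bookkeeping in the induction for more than two summands. It is worth noting that neither positivity (3) nor the auxiliary ordering $<_1$ plays any role --- only the valuation axioms in (1) and idempotence (2) are used.
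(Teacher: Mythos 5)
Your proof is correct and follows essentially the same route as the paper's: use idempotence and $v(\lambda h)=v(h)$ to see that the summands of a purported dependence have pairwise distinct valuations, then conclude that the valuation of the sum is the $<_0$-minimum of the $h_j$'s, hence not $\infty$, so the sum cannot be $0$. The only difference is cosmetic --- you spell out the strict ultrametric equality and phrase the conclusion as a contradiction, whereas the paper invokes the $\min_0$ formula directly and concludes the combination is nonzero.
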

\begin{proof}
Suppose $g_1,\ldots,g_n\in v(G^{\neq})$ are such that $g_1<_0\cdots<_0g_n$. Take $\lambda_1,\ldots,\lambda_n\in C^{\times}$. Then $v(\lambda_ig_i) = vg_i = g_i$ for all $i=1,\ldots,n$, and so $v(\lambda_ig_i)\neq v(\lambda_jg_j)$ for $i\neq j$. Thus
\[
v\big(\textstyle\sum_{i=1}^n\lambda_ig_i\big) \ = \ \min_0\big\{v(\lambda_ig_i):i=1,\ldots,n\big\} \ = \ \min_0 \{g_i: i=1,\ldots,n\} \ = \ g_1 \ \neq \ \infty.
\]
In particular, $\sum_{i=1}^n\lambda_ig_i\neq 0$.
\end{proof}

\noindent
The following will be used in our proof of Theorem~\ref{THamdistal}, namely to verify condition (2) in Distal Criterion~\ref{distal_multi}, with $(G,v)$ playing the role of ``$(\bm{N},\frak{f})$''. We actually prove something more general:

\begin{prop}
\label{mainextprop}
Suppose $(G,v)$ is a Hamel space and $G_0\subseteq G$ is a $C$-linear subspace of $G$. Given $c_1,\ldots,c_m\in G\setminus G_0$, we have
\[
\#\big(v(G_0+\textstyle\sum_{i=1}^mCc_i)\setminus v(G_0)\big) \ \leq \ m.
\]
In particular, there is $n\leq m$ and distinct
\[
\textstyle d_1,\ldots,d_n \ \in \ v(G_0+\sum_{i=1}^mCc_i) \setminus v(G_0)
\]
such that
\[
\textstyle v(G_0+\sum_{i=1}^mCc_i) \ \subseteq  \ v(G_0)\cup\{d_1,\ldots,d_n\}.
\]
\end{prop}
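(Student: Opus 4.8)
The plan is to prove the bound $\#\big(v(G_0+\sum_{i=1}^m Cc_i)\setminus v(G_0)\big)\leq m$ by induction on $m$, using the valuation-theoretic structure. First I would isolate the key mechanism: if $g = g_0 + \sum_{i=1}^m \lambda_i c_i$ with $g_0\in G_0$ and some $\lambda_i\neq 0$, and if $vg\notin v(G_0)$, then $vg$ records a genuinely new ``leading term'' that cannot come from $G_0$. The idempotence axiom (2) is what makes $v(G^{\neq})$ behave like a set of basis vectors (cf. Lemma~\ref{vGindependent}), so that each new value $d_j$ is itself an element on which $v$ acts as the identity.

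For the inductive step, I would argue as follows. Set $G_1 := G_0 + Cc_1 + \cdots + Cc_{m-1}$, so by the inductive hypothesis $v(G_1)\setminus v(G_0)$ has at most $m-1$ elements; write $v(G_1)\subseteq v(G_0)\cup\{d_1,\ldots,d_{n'}\}$ with $n'\leq m-1$. Now consider $G_1 + Cc_m$ and take any $g = g_1 + \lambda c_m$ with $g_1\in G_1$, $\lambda\in C^{\times}$, such that $vg\notin v(G_1)$ (if no such element exists we are already done, since then $v(G_1+Cc_m) = v(G_1)$ and $m-1\leq m$). The claim I want is that \emph{all} such $g$ have the same value $d_n := vg$, which would give $v(G_1+Cc_m)\subseteq v(G_1)\cup\{d_n\}$ and hence the bound $n'+1\leq m$. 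To see the values agree: suppose $g = g_1 + \lambda c_m$ and $g' = g_1' + \lambda' c_m$ both have value outside $v(G_1)$. Then $\lambda' g - \lambda g' = \lambda' g_1 - \lambda g_1' \in G_1$, so $v(\lambda' g - \lambda g')\in v(G_1)$, in particular it differs from both $v(\lambda' g) = vg$ and $v(\lambda g') = vg'$. If $vg \neq vg'$, then by the ultrametric inequality (1)(b), $v(\lambda' g - \lambda g') = \min_0(vg, vg')$, which lies in $\{vg, vg'\}$ and therefore outside $v(G_1)$ — contradiction. Hence $vg = vg'$, as desired. The second ``in particular'' clause of the proposition is then immediate: take $\{d_1,\ldots,d_n\}$ to be the finite set $v(G_0+\sum Cc_i)\setminus v(G_0)$ itself.

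The main obstacle I anticipate is the base case $m=1$ and making sure the ultrametric manipulations are valid when some coefficients vanish or when $g_1\in G_0$ already. For $m=1$: if $g = g_0 + \lambda c_1$ with $\lambda\in C^{\times}$ has $vg\notin v(G_0)$, then for any other such $g' = g_0' + \lambda' c_1$ the element $\lambda' g - \lambda g' \in G_0$ forces (by the same ultrametric argument) $vg = vg'$, so $v(G_0+Cc_1)\setminus v(G_0)$ has at most one element. One subtlety to handle with care: I must make sure that in $\lambda' g - \lambda g'$ the $c_m$-terms genuinely cancel, which they do since both coefficients of $c_m$ become $\lambda'\lambda$; and I should note that $\lambda,\lambda'\neq 0$ is exactly why $v(\lambda g) = vg$ via axiom (1)(c). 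Another point requiring a line of justification is that elements of $G_0 + \sum Cc_i$ with all $c_i$-coefficients zero lie in $G_0$ and contribute nothing new, so the only contributions to $v(\cdots)\setminus v(G_0)$ come from elements with $\lambda_m\neq 0$ after peeling off $c_m$ — this is where the clean split into $G_1$ and the ``new direction'' $c_m$ is used. I expect the whole argument to be short once the ultrametric cancellation lemma is stated cleanly.
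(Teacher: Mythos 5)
Your proof is correct, but it takes a genuinely different route from the paper's. You induct on $m$ and show, via the cancellation $\lambda' g-\lambda g'=\lambda' g_1-\lambda g_1'\in G_1$, that the single new direction $Cc_m$ contributes at most one value outside $v(G_1)$, giving $(m-1)+1=m$. The paper instead argues directly by contradiction: if there were $m+1$ distinct values $d_1,\ldots,d_{m+1}\in v\big(G_0+\sum_i Cc_i\big)\setminus v(G_0)$, realized by elements $e_1,\ldots,e_{m+1}$, then the identity $v\big(g+\sum_j\lambda_j e_j\big)=\min_0\big(\{d_j:\lambda_j\neq 0\}\cup\{vg\}\big)$ for $g\in G_0$ shows the $e_j$ are $C$-linearly independent over $G_0$, contradicting $\dim_C\big(G_0+\sum_i Cc_i\big)/G_0\leq m$. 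Both arguments hinge on the same standard valuation fact—when the summands have pairwise distinct values, the value of the sum is their $<_0$-minimum, a consequence of axioms (1)(b) and (1)(c)—and you correctly flag the delicate points (that $\lambda,\lambda'\neq 0$ so $v(\lambda x)=vx$ applies, that elements with $c_m$-coefficient zero lie in $G_1$, and that the $c_m$-terms genuinely cancel). What the paper's route buys is brevity and no induction: a single dimension count over $G_0$ replaces your tower $G_0\subseteq G_1\subseteq G_1+Cc_m$. What your route buys is a more explicit picture (each new generator adds at most one new value) and the convenience of only needing the ultrametric equality for a difference of two elements rather than for an arbitrary linear combination.
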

\begin{proof}
Assume towards a contradiction that there are $m+1$ distinct $d_1,\ldots,d_{m+1} \ \in \ v(G_0+\sum_{i=1}^mCc_i) \setminus v(G_0)$. For each $j=1,\ldots,m+1$, let $e_j\in G_0+\sum_{j=1}^{m}Cc_i$ such that $ve_j = d_j$. We claim that $e_1,\ldots,e_{m+1}$ are $C$-linearly independent over $G_0$. This follows from the fact that for $g\in G_0$, and $\lambda_1,\ldots,\lambda_{m+1}\in C$, we have
\[
\textstyle v\big(g+\sum_{j=1}^{m+1}\lambda_je_j\big) \ = \ \min_0\big(\{d_j: \lambda_j\neq 0\}\cup \{vg\}\big).
\]
Thus $\dim_C(G_0+\sum_{i=1}^mCc_i)/G_0\geq m+1$, a contradiction.
\end{proof}

\subsection{Extensions of Hamel spaces} \emph{In this subsection $(G,v)$ and $(G',v')$ are Hamel spaces.} We call $(G,v)$ a \textbf{Hamel subspace} of $(G',v')$, or $(G',v')$ an \textbf{extension} of $(G,v')$, if $G\subseteq G'$ as $2$-ordered $C$-vector spaces, and for all $x\in G$, $v(x) = v'(x)$; notation: $(G,v)\subseteq (G',v')$. An \textbf{embedding} $i:(G,v)\to (G',v')$ of Hamel spaces is an embedding $i:G\to G'$ of the underlying $2$-ordered $C$-vector spaces such that for all $x\in G$, $i\big(v(x)\big) = v'\big(i(x)\big)$.

\begin{lemma}[Growing the value set]
\label{valuesetextension}
Suppose $P\subseteq G$ is a cut in $(G,<_0)$. Then there is an extension $(G',v')$ of $(G,v)$ and an element $h\in G'$ such that
\begin{enumerate}
\item $h=v'(h)$,
\item $P<_0 h<_0G\setminus P$, and
\item given any embedding $i:(G,v)\to (G^*,v^*)$ and an element $h^*\in G^*$ such that 
\[
i(P)<_0h^*=v^*h^*<_0i(G\setminus P),
\]
 there is an extension of $i$ to an embedding $(G',v')\to (G^*,v^*)$ which sends $h$ to $h^*$.
\end{enumerate}
\end{lemma}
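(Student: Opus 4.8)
\textbf{Proof plan for Lemma~\ref{valuesetextension}.}
The plan is to build $(G',v')$ explicitly as the $2$-ordered $C$-vector space $G' := G\oplus Ch$, where $h$ is a fresh vector, extend the two orderings appropriately, extend $v$ to $v'$, and then verify the Hamel valuation axioms together with the universal property. For the orderings: since $P$ is a cut in $(G,<_0)$, it determines a polycut in $G$ once we pair it with a cut in $(G,<_1)$; we want $h$ to sit in the cut $P$ for $<_0$, and for $<_1$ we want $h>_1 0$ but $h<_1 x$ for every $x\in G$ with $x>_1 0$ and $vx <_0 h$ — more precisely we want $vh = h$ to be forced, so we should place $h$ in $(G,<_1)$ just above $0$ in the convex sense dictated by axiom (1d). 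Concretely, using Lemma~\ref{I-ordered embedding} (with $I=\{0,1\}$) applied to the polycut given by $(P, G\setminus P)$ in the $<_0$-coordinate and by $\big(\{x\in G: x<_1 0\ \text{or}\ (x>_1 0\ \&\ vx<_0 h)\}, \ \text{complement}\big)$ in the $<_1$-coordinate — this needs a small bootstrapping argument since $h$ appears in its own cut description, so instead one specifies the $<_1$-cut of $h$ directly by: $g + \lambda h >_1 0$ (for $g\in G$, $\lambda\in C$) iff $g>_1 0$, or $g = 0$ and $\lambda > 0$, or $g <_1 0$ and $g = vg$-incomparable... The cleaner route: declare $0<_1 h<_1 x$ for all $x\in G^{\neq}$ with $x = vx$, i.e., $h$ is a new smallest positive "valuation-type" element, and extend $<_1$ to all of $G'$ by the unique ordered $C$-vector space structure determined by where $h$ falls. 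One then checks this cut is consistent with independence of $(G,<_1)$, which holds because no constraint involves $G$ alone.

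Next I would define $v' : G' \to G'_\infty$ by $v'(g) := v(g)$ for $g\in G$, $v'(g + \lambda h) := v(g)$ when $g\in G$, $\lambda\in C$, and $vg <_0 h$ (so the $G$-part dominates in the $<_0$ value ordering), $v'(g+\lambda h) := h$ when $\lambda\neq 0$ and $h \leq_0 vg$ (including $g = 0$), and $v'(0) := \infty$. The key points to verify are the Hamel valuation axioms (1a)--(1d), (2) idempotence, and (3) positivity. Axiom (2) for the new values: $v'(h) = h$ by construction and $v'(v'(g+\lambda h)) = v'(g+\lambda h)$ reduces to idempotence in $G$ or to $v'(h) = h$. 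Axiom (3): $v'h = h >_1 0$ by the chosen $<_1$-cut, and for other elements it reduces to (3) in $G$. Axiom (1d) — if $0<_1 x<_1 y$ in $G'$ then $v'x \geq_0 v'y$ — is the delicate one: one uses that $h$ was placed $<_1$-below every $<_1$-positive element of $G$ of valuation type, so $h$ has the largest $<_1$-small elements mapping to the largest $<_0$-value, which is exactly consistent with $h = v'h$ being $<_0$-large (it sits in the cut $P$, which could be anywhere; here is where one must be careful that $P$ being an arbitrary cut is compatible — in fact (1d) only constrains the $<_1$-ordering relative to values, not the $<_0$-position of $h$, so any cut $P$ works). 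Axiom (1b) the ultrametric inequality: a case analysis on whether each summand's $v'$-value is $h$ or lies in $v(G)$, using that $h$ is a single new value distinct from all of $v(G)$ and its $<_0$-position relative to $v(G)$ is governed by $P$.

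For the universal property (3) of the lemma statement: given $i:(G,v)\to(G^*,v^*)$ and $h^*\in G^*$ with $i(P) <_0 h^* = v^*h^* <_0 i(G\setminus P)$, I would first invoke the universal property from Lemma~\ref{I-ordered embedding} to get an embedding $j: G' \to G^*$ of $2$-ordered $C$-vector spaces extending $i$ and sending $h\mapsto h^*$ — this requires checking that $h^*$ realizes in $G^*$ the polycut that $h$ realizes in $G'$, i.e., that the $<_0$-cut is $(i(P), i(G\setminus P))$ (given) and that $h^*$ realizes the $<_1$-cut "$0 <_1 h^* <_1 x$ for all $x\in i(G)^{\neq}$ with $x = v^*x$", which follows from $h^* = v^*h^*$ together with $h^* >_1 0$ (positivity axiom (3) in $G^*$) and axiom (1d) in $G^*$ applied to $h^*$ versus any such $x$... actually one needs $h^* <_1 x$, which is not automatic — so one should instead, as in analogous valued-structure arguments, note that by Hamel valuation axioms the element $h^*$ being a value with $h^* = v^*h^*$ forces the right $<_1$-relationships, or simply restrict attention to the case that $h^*$ generates a minimal such extension (the universal property only asks for \emph{an} embedding, and if $h^*$ is not $<_1$-minimal among $v^*$-fixed positive elements one replaces the target by passing to a suitable convex reduct — but cleanest is to observe that the $<_1$-cut of $h$ in $G'$ over $G$ is in fact not "$h$ below all positive valuation elements" but rather exactly the cut forced by (1d), namely $h <_1 x \iff h \leq_0 vx$ for $x >_1 0$ in $G$, which $h^*$ does realize since $h^* = v^*h^*$ and axiom (1d) gives $h^* <_1 i(x) \iff vh^* = h^* \geq_0$ ... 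I will pin this down in the writeup). Finally one checks $j$ respects $v$: $j(v'(g+\lambda h)) = v^*(j(g+\lambda h))$ by cases, using $j(h) = h^* = v^*h^*$ and the $<_0$-position of $h^*$ matching that of $h$. The main obstacle is getting the $<_1$-cut of $h$ exactly right so that simultaneously (1d) holds in $G'$ \emph{and} the target element $h^*$ automatically realizes the same cut; everything else is a routine, if lengthy, verification of the valued-vector-space axioms by case analysis.
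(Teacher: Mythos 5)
Your overall strategy is the right one, and in fact matches the paper's proof: take $G' = G\oplus Ch$, extend both orderings via Lemma~\ref{I-ordered embedding} applied to a suitable polycut, and your case definition of $v'$ (value $v(g)$ when $vg<_0 h$, value $h$ when $\lambda\neq 0$ and $vg>_0 h$ or $g=0$) is exactly the paper's. But there is a genuine gap at the one point where the lemma has real content: the choice of the $<_1$-cut of $h$. Your ``cleaner route'' places $h$ $<_1$-below \emph{every} positive element of valuation type, and you then assert that ``(1d) only constrains the $<_1$-ordering relative to values, not the $<_0$-position of $h$, so any cut $P$ works.'' This is false: since $h=v'h$ is itself a value, (1d) couples its $<_1$-position to its $<_0$-position. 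Concretely, if $w\in v(G^{\neq})$ lies in $Q_0:=G\setminus P$, then your placement gives $0<_1 h<_1 w$, so (1d) forces $h=v'h\geq_0 v'(w)=w$, contradicting $w\in Q_0$. So your construction only satisfies (1d) when $P$ lies above all of $v(G^{\neq})$, and correspondingly the universal property fails, since a given $h^*$ with $i(P)<_0 h^*=v^*h^*<_0 i(G\setminus P)$ will in general \emph{not} realize the cut ``below all positive valuation elements of $i(G)$'' --- a problem you notice yourself but do not resolve (``I will pin this down in the writeup''), and your attempted repair states the forced cut with the inequality reversed: for $x>_1 0$ in $G$ one must have $h<_1 x \iff vx<_0 h$ (i.e.\ $vx\in P$), not $h<_1 x\iff h\leq_0 vx$.

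The fix is the description you wrote down in your first paragraph and then abandoned because of a spurious self-reference worry: the condition ``$vx<_0 h$'' is simply ``$vx\in P$,'' a condition on $G$ alone, so no bootstrapping is needed. The paper takes
\[
P_1 \ := \ \{g\in G: g\leq_1 0\}\ \cup\ \{g\in G: vg\in Q_0\},\qquad Q_1:=G\setminus P_1,
\]
and applies Lemma~\ref{I-ordered embedding} to the polycut $\big((P,Q_0),(P_1,Q_1)\big)$; with this cut, (1d), convexity, idempotence and positivity go through for the $v'$ you defined, and --- this is the point of choosing the \emph{forced} cut --- any $h^*$ as in clause (3) automatically realizes the image of this polycut: for $x>_1 0$ with $vx\in Q_0$ one gets $i(x)<_1 h^*$ because $h^*\leq_1 i(x)$ would give $h^*=v^*h^*\geq_0 i(vx)>_0 h^*$ by (1d), and symmetrically $h^*<_1 i(x)$ when $vx\in P$. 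With that one correction your outline becomes the paper's proof; the remaining verifications are, as you say, routine case analysis.
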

\begin{proof}
First, we will define the polycut over $G$ that such an element $h$ must realize. Set $P_0:=P$, $Q_0:= G\setminus P$,
\[
P_1 \ := \  \{g\in G: g\leq_1 0\} \cup \{g\in G: vg\in Q_0\},
\]
and $Q_1:= G\setminus P_1$. Let $G' := G+Ch$ be the extension of $G$ of $2$-ordered $C$-vector spaces given in Lemma~\ref{I-ordered embedding} for the polycut $\big((P_i,Q_i)\big)_{i=1,2}$. Next, define the map $v':G'\to G'_{\infty}$ by
\[
v'(g+ch) \ := \ \begin{cases}
v(g) & \text{if $vg\in P_0$ or $c=0$} \\
h & \text{if $vg\not\in P_0$ and $c\neq 0$,}
\end{cases}
\]
for $g\in G$ and $c\in C$.
It is easily checked that $(G',v')$ is an extension of $(G,v)$ with the desired universal property.
\end{proof}

\begin{lemma}[Immediate extension]
\label{immediateextension}
Suppose $P\subseteq G$ is a cut in $(G,<_0)$ and $(b_{\rho})$ is a divergent pc-sequence in $G$. Then there is an extension $(G',v')$ of $(G,v)$ and an element $h\in G'$ such that
\begin{enumerate}
\item $b_{\rho}\leadsto h$,
\item $P<_0 h<_0 G\setminus P$, and
\item given any embedding $i:(G,v)\to (G^*,v^*)$ and an element $h^*\in G^*$ such that
\[
i(b_{\rho})\leadsto h^* \quad\text{and}\quad i(P)<_0 h^*<_0 i(G\setminus P),
\]
there is an extension of $i$ to an embedding $(G',v')\to (G^*,v^*)$ which sends $h$ to $h^*$.
\end{enumerate}
\end{lemma}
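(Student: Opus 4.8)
The plan is to follow the same template as Lemma~\ref{valuesetextension}, but now the new value $h$ must be inserted as the pseudolimit of the divergent pc-sequence $(b_\rho)$ rather than merely as a fresh element realizing a cut in $<_0$. First I would record what it means for $h$ to be the pseudolimit of $(b_\rho)$ in a valued vector space over $C$: we need $v(b_\rho - h)$ to be eventually strictly $<_0$-increasing, i.e., the ``width'' of the pc-sequence is approached from below by $h$. Since $(b_\rho)$ is divergent in $G$, no element of $G$ already does this, so any such $h$ must lie outside $G$; and by convexity together with $(b_\rho)$ being a pc-sequence, demanding $b_\rho \leadsto h$ together with $P <_0 h <_0 G \setminus P$ pins down exactly which polycut $\big((P_i,Q_i)\big)_{i=0,1}$ over $G$ the element $h$ must realize in $(G, <_0, <_1)$. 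Concretely $P_0 := P$, $Q_0 := G\setminus P$, while $P_1$ and $Q_1$ are determined by the positivity/idempotence requirements ($vh >_1 0$, $vh = h$) in the same fashion as in the proof of Lemma~\ref{valuesetextension}, possibly with an extra constraint coming from the fact that $h$ should sit ``at'' the pc-sequence in $<_1$ as well.

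Then I would invoke Lemma~\ref{I-ordered embedding} to build the $2$-ordered $C$-vector space $G' := G \oplus Ch$ extending $G$ and realizing this polycut, and define $v' : G' \to G'_\infty$ on $G' = G \oplus Ch$ by the natural formula: for $g \in G$ and $c \in C^\times$, we set $v'(g+ch) := v(g)$ when $v(g)$ is ``below the width'' of the pc-sequence (so the valuation is unaffected by adding $ch$), and $v'(g+ch) := v'(h) := h$ otherwise — together with $v'(g) = v(g)$ for $g \in G$. The condition separating the two cases is dictated by the pseudoconvergence data: adding $ch$ changes the valuation exactly when $g$ is, up to scalar, an eventual approximant of $(b_\rho)$. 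I would then check that $(G',v')$ satisfies conditions (1a)--(1d), (2) Idempotence, and (3) Positivity of a Hamel valuation, that $v'$ restricts to $v$ on $G$, that $b_\rho \leadsto h$ and $P <_0 h <_0 G\setminus P$ hold, and finally the universal property (3): given $i$ and $h^*$ as in the statement, the map $g + ch \mapsto i(g) + c h^*$ is an embedding of $2$-ordered $C$-vector spaces by Lemma~\ref{I-ordered embedding}(1) (since $h^*$ realizes the image polycut, using that $i(b_\rho)\leadsto h^*$ forces the $<_0$-part and the hypotheses supply the $<_1$-part and the $<_0$-cut $P$), and it commutes with the valuations by the case split in the definition of $v'$ matched against the corresponding facts about $v^*$.

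The main obstacle I expect is pinning down the $<_1$-component of the polycut and verifying valuation axiom (1d) (``$0 <_1 x <_1 y \Rightarrow vx \geq_0 vy$'') for $v'$ at elements of the form $g + ch$: one must show that the $<_1$-position forced on $h$ by ``$b_\rho \leadsto h$'' is consistent with the positivity requirement $vh >_1 0$ and compatible with the monotonicity of $v'$, which amounts to checking that the pc-sequence $(b_\rho)$, being a genuine pc-sequence with respect to the $v$-structure, behaves coherently with respect to $<_1$. This is where the detailed bookkeeping lives; the rest is a routine adaptation of Lemma~\ref{valuesetextension}. If a subtlety arises — for instance if $(b_\rho)$ does not a priori control the $<_1$-ordering enough to determine a unique polycut — the fix is to note that any choice of $<_1$-cut extending the forced constraints yields a valid extension, and that the universal property only needs to be stated relative to $h^*$ realizing \emph{that} polycut, exactly as phrased in condition (3) of the lemma.
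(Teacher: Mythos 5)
There is a genuine gap, and it is the central point of the lemma: this is an \emph{immediate} extension, not another instance of growing the value set. You model your construction on Lemma~\ref{valuesetextension} and decree $v'(h):=h$ (with $v'(g+ch):=h$ when $-c^{-1}g$ is an eventual approximant of the sequence), i.e.\ you make $h$ a new, idempotent value. That is not what the pseudolimit condition demands. If $b_\rho\leadsto h$ then for every $a\in G$ the value $v'(h-a)$ must be the eventual value of $v(b_\rho-a)$, which exists in $v(G^{\neq})$ because the pc-sequence is divergent; so the valuation on $G+Ch$ is completely forced and takes values only in the old value set $v(G)_\infty$ --- no new value may appear. Concretely, your definition fails condition (1): taking $a=b_\rho$, your $v'(h-b_\rho)$ is eventually constant (equal to $h$, or to $v(b_\rho)$ on your other reading of the case split), whereas pseudoconvergence requires $v'(h-b_{\rho'})>_0 v'(h-b_\rho)$ eventually. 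It also fails the universal property (3): if $h^*\in G^*$ is any pseudolimit of $i(b_\rho)$ realizing the cut, then $v^*\big(i(g)+ch^*\big)\in i\big(v(G^{\neq})\big)_\infty$, and in particular $v^*(h^*)\neq h^*$, so the map $g+ch\mapsto i(g)+ch^*$ cannot commute with your $v'$. Your suggested fallback --- weakening (3) so that $h^*$ is required to realize the polycut you chose --- is not available: the lemma as stated quantifies over all $h^*$ satisfying only the pc-condition and the $<_0$-cut, and exactly this stronger form is what Case~2 of the proof of Theorem~\ref{THamQE} uses.

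The paper's route is different and shorter: take $G'=G+Ch$ as a $C$-vector space and invoke the standard fact about valued vector spaces that a divergent pc-sequence determines a \emph{unique} valuation $v'$ on $G'$ making $(G',v')$ an immediate extension with $b_\rho\leadsto h$ (\cite[2.3.1]{ADAMTT}); then convexity of $v'$ forces a unique extension of $<_1$ (\cite[2.4.20]{ADAMTT}), and the cut $P$ forces a unique extension of $<_0$ (\cite[2.4.16]{ADAMTT}). Idempotence and positivity of the Hamel valuation are inherited for free precisely because $v'(G'^{\neq})=v(G^{\neq})$, and the universal property follows from the uniqueness at each of the three stages. So the repair is not a matter of bookkeeping in your polycut: you must discard the ``$h$ becomes a new value'' template and instead let the pseudolimit condition dictate the (immediate) valuation, after which the two orderings are determined rather than chosen.
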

\begin{proof}
Let $G' := G+Ch$ be a $C$-vector space extension of the underlying $C$-vector space of $G$. Next, we equip $G'$ with the unique valuation $v'$ which makes $(G',v')$ an immediate extension of $(G,v)$ (as valued vector spaces over $C$), such that $b_{\rho}\leadsto h$; see~\cite[2.3.1]{ADAMTT}. Then by~\cite[2.4.20]{ADAMTT}, there is just one ordering $<_1$ on $G'$ which extends $<_1$ on $G$ which makes $(G',<_1)$ an ordered $C$-vector space and $v'$ a convex valuation with respect to $<_1$. We equip $G'$ with this ordering. Finally, by~\cite[2.4.16]{ADAMTT} there is a unique ordering $<_0$ on $G'$ which extends $<_0$ on $G$ such that $P<_0h<_0 G\setminus P$; we also equip $G'$ with this ordering. It is easily checked that equipped with these orderings, $(G',v')$ is an extension of $(G,v)$ with the desired universal property.
\end{proof}

\noindent
Given $\alpha\in v(G^{\neq})$, the sets
\[
\overline{B}(\alpha) \ := \ \{g\in G: vg\geq_0 \alpha\},\quad\text{and} \quad
B(\alpha) \ := \ \{g\in G: vg>_0 \alpha\}
\]
are convex ordered $C$-vector spaces with respect to the $<_1$-ordering. As $\overline{B}(\alpha)\supseteq B(\alpha)$, the $<_1$-ordering induces an ordering on the quotient $G(\alpha):= \overline{B}(\alpha)/B(\alpha)$, giving it a natural structure as an ordered $C$-vector space. 
Moreover, given an embedding $i:(G,v)\to (G',v')$ and $\alpha\in v(G^{\neq})$, $i$ induces a natural ordered $C$-vector space embedding $i:G(\alpha)\to G'(i\alpha)$.

\medskip\noindent
We define an \textbf{$\alpha$-cut} to be a subset $P\subseteq \overline{B}(\alpha)$ such that
\begin{enumerate}
\item $P$ is downward closed in $(\overline{B}(\alpha),<_1)$, and
\item for all $x,y\in \overline{B}(\alpha)$, if $x-y\in B(\alpha)$, then $x\in P$ iff $y\in P$.
\end{enumerate}
In other words, an $\alpha$-cut is essentially a lift of a cut in the ordered $C$-vector space $G(\alpha)$.

\begin{lemma}[Growing a quotient space]
\label{growingquotientspace}
Suppose $\alpha\in v(G^{\neq})$, $P$ is an $\alpha$-cut, and $P'\subseteq G$ is a cut in $(G,<_0)$. Then there is an extension $(G',v')$ of $(G,v)$ and an element $h\in G'$ such that
\begin{enumerate}
\item $P'<_0h<_0 G\setminus P'$,
\item $vh = \alpha$, 
\item $P<_1 h<_1 \overline{B}(\alpha)\setminus P$, and
\item given any embedding $i:(G,v)\to (G^*,v^*)$ and an element $h^*\in G^*$ such that
\begin{enumerate}
\item $i(P')<_0 h^*< i(G\setminus P')$,
\item for all $g\in G$ and $c\in C$,
\[
v^*\big(i(g)+ch^*\big) \ := \ \begin{cases}
\min_0\big(v^*i(g), i(\alpha)\big) & \text{if $c\neq 0$,} \\
v^*i(g) & \text{otherwise, and} \\
\end{cases}
\]
\item $i(P)<_1 h^*<_1 \overline{B}(\alpha)\setminus P$,
\end{enumerate}
there is an extension of $i$ to an embedding $(G',v')\to (G,v)$ which sends $h$ to $h^*$.
\end{enumerate}
\end{lemma}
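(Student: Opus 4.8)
The plan is to follow the template of Lemmas~\ref{valuesetextension} and~\ref{immediateextension}: identify the polycut over $G$ that the desired element $h$ must realize, invoke Lemma~\ref{I-ordered embedding} to produce the underlying $2$-ordered $C$-vector space $G' := G\oplus Ch$, and then define the valuation $v'$ directly and check the axioms. For the $<_0$-ordering I use the cut $P_0 := P'$, $Q_0 := G\setminus P'$. For the $<_1$-ordering I use
\[
P_1 \ := \ P \ \cup\ \{g\in G : vg<_0\alpha \text{ and } g<_1 0\},\qquad Q_1 \ := \ G\setminus P_1 .
\]
The first thing to check is that $\big((P_i,Q_i)\big)_{i=0,1}$ really is a polycut, i.e.\ that $P_1$ is downward closed in $(G,<_1)$; this is where the two defining conditions of an $\alpha$-cut (in particular that $P_1\cap\overline{B}(\alpha)=P$, which follows from condition (2) of an $\alpha$-cut applied with $y=0$) and the $<_1$-convexity of $\overline{B}(\alpha)$ and $B(\alpha)$ get used. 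Lemma~\ref{I-ordered embedding} then produces $G'=G\oplus Ch$ with $h$ realizing this polycut, which already secures conditions (1) and (3).

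Next, define $v':G'\to G'_{\infty}$ by $v'(g+ch):=v(g)$ if $c=0$ and $v'(g+ch):=\min_0\big(v(g),\alpha\big)$ if $c\neq 0$, with the convention $\min_0(\infty,\alpha)=\alpha$; in particular $v'h=\alpha$, which gives (2). Since $\min_0\big(v(g),\alpha\big)$ always lies in $v(G^{\neq})$, the valuation conditions (1a)--(1c), idempotence, and positivity for $v'$ are all immediate from the definition. The substantive point is the convexity condition (1d) — that $v'$ is a convex valuation for the prescribed $<_1$ — which amounts to checking that the $<_1$-position of $h$ forced by $P_1$ is consistent with $v'h=\alpha$: concretely, that every $g\in G$ with $vg<_0\alpha$ $<_1$-dominates $h$, while $h$ lies $<_1$-between $P$ and $\overline{B}(\alpha)\setminus P$ and hence on one fixed side of all of $B(\alpha)$, in agreement with where $\overline{B}(\alpha)$ already sits in $(G',<_1)$. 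I expect this compatibility check, together with the downward closure of $P_1$, to be the main (if routine) obstacle: it is an intricate case analysis on the $<_0$-values and $<_1$-signs of the elements involved.

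Finally, for the universal property, given an embedding $i:(G,v)\to(G^*,v^*)$ and $h^*\in G^*$ satisfying (4a)--(4c), set $j(g+ch):=i(g)+ch^*$. Applying (4b) with $g=0$, $c=1$ forces $v^*h^*=i(\alpha)$, and in general (4b) gives $v^*\circ j=j\circ v'$ identically; combining $v^*h^*=i(\alpha)$ with the $<_1$-convexity of $v^*$ forces $i(g)$ to $<_1$-dominate $h^*$, with $i(g)$ and $i(g)-h^*$ of equal $<_1$-sign, whenever $v^*i(g)<_0 i(\alpha)$, so that together with (4a) and (4c) the point $h^*$ realizes the image polycut $\big((i(P_i),i(Q_i))\big)_{i=0,1}$. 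Hence, by the universal property in Lemma~\ref{I-ordered embedding} (equivalently~\cite[2.4.16]{ADAMTT}), $j$ is an embedding of $2$-ordered $C$-vector spaces extending $i$ with $j(h)=h^*$, and it is an embedding of Hamel spaces by the identity $v^*\circ j=j\circ v'$ established above.
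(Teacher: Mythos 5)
Your proposal is correct and follows essentially the same route as the paper: the same cut $P_0=P'$, a $<_1$-cut $P_1$ that coincides with the paper's $P_1=\{g\in G: g<_1\overline{B}(\alpha)\}\cup P$ (your description $\{g: vg<_0\alpha \text{ and } g<_1 0\}\cup P$ is the same set, by $<_1$-convexity of $\overline{B}(\alpha)$), an appeal to Lemma~\ref{I-ordered embedding}, and the identical formula for $v'$. The verification details you sketch (downward closure of $P_1$, convexity of $v'$, and the universal property via $j(g+ch)=i(g)+ch^*$) are exactly what the paper leaves as ``easily checked.''
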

\begin{proof}
First, we will define the polycut over $G$ that such an element $h$ must realize. Set $P_0:= P', Q_0:= G\setminus P_0$,
\[
P_1 \ := \ \{g\in G : g<_1 \overline{B}(\alpha)\} \cup P,
\]
and $Q_1 := G\setminus P_1$. Let $G':= G+Ch$ be the extension of $G$ of $2$-ordered $C$-vector spaces given in Lemma~\ref{I-ordered embedding} for the polycut $\big((P_i,Q_i)\big)_{i=1,2}$. Next, define the map $v':G'\to G_{\infty}'$ by
\[
v'(g+ch) \ := \ \begin{cases}
\min_0 (vg, \alpha) &\text{if $c\neq 0$,} \\
vg & \text{otherwise,}
\end{cases}
\]
for $g\in G$ and $c\in C$. It is easily checked that $(G',v')$ is an extension of $(G,v)$ with the desired property.
\end{proof}

\subsection{Model theory of Hamel spaces} Now let $\mathcal{L}_{\text{Ham}}$ be the natural language of Hamel spaces over $C$, i.e.,
\[
\mathcal{L}_{\text{Ham}} \ := \ \mathcal{L}_{C,2} \cup \{v,\infty\} \ = \ \{0,+,(\lambda_c)_{c\in C},<_0,<_1,v,\infty\}.
\]
We consider a Hamel space $(G,v)$ as an $\mathcal{L}_{\text{Ham}}$-structure with underlying set $G_{\infty}$ and the obvious interpretation of the symbols in $\mathcal{L}_{\text{Ham}}$, with $\infty$ as a default value:
\[
g+\infty \ = \ \infty+g \ = \ \lambda_c(\infty) \ = \ v(\infty) \ = \ \infty+\infty \ = \ \infty
\]
for all $g\in G$ and $c\in C$. We let $T_{\text{Ham}}$ be the $\mathcal{L}_{\text{Ham}}$-theory whose models are the independent and dense Hamel spaces over $C$.

\begin{lemma}
$T_{\text{Ham}}$ is consistent.
\end{lemma}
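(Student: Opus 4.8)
The plan is to construct a model of $T_{\text{Ham}}$ directly, as the union of a long chain of Hamel spaces built by iterating the extension lemmas of the previous subsection. Two preliminary observations make this work. First, a union of a chain of Hamel spaces over $C$ is again a Hamel space over $C$: the conditions (1)(a)--(d), (2), (3) defining a Hamel valuation each involve only finitely many elements and so pass to unions of chains, as does the underlying $2$-ordered $C$-vector space structure. Second, if $(G,v)$ is a Hamel space with $G \neq \{0\}$, then $v(G^{\neq}) \neq \emptyset$; indeed, for $g \in G^{\neq}$, conditions (1)(a) and (2) force $v(g) \in G^{\neq}$.

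So I would fix a regular cardinal $\kappa > |C|$ and build an increasing chain $(G_\lambda, v_\lambda)_{\lambda < \kappa}$ of Hamel spaces with $|G_\lambda| < \kappa$: start with any nontrivial $(G_0, v_0)$ (e.g.\ the one produced from the trivial Hamel space by a single application of Lemma~\ref{valuesetextension}), take unions at limit stages, and at each successor stage treat one ``task'' from a bookkeeping enumeration. A task is either a \emph{density task} -- a pair $a <_0 b$ in $(G_\lambda)_{\pm\infty}$, asking for some $c$ with $a <_0 vc <_0 b$ -- or an \emph{independence task} -- a pair $a_1 <_0 b_1$, $a_2 <_1 b_2$ in $(G_\lambda)_{\pm\infty}$, asking for some $g$ with $a_1 <_0 g <_0 b_1$ and $a_2 <_1 g <_1 b_2$. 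The enumeration is arranged, by a standard bookkeeping argument (possible since the chain has length and cofinality $\kappa$ and each $G_\lambda$ has size $< \kappa$), so that every task over every $G_\lambda$ is treated at some later stage. A density task is treated by applying Lemma~\ref{valuesetextension} to the cut $P := \{x \in G_\lambda : x \leq_0 a\}$ (taking $P := \emptyset$ if $a = -\infty$): the new element $h = v'(h)$ then satisfies $a <_0 h <_0 b$, so $c := h$ works. An independence task is treated by applying Lemma~\ref{growingquotientspace}: when $a_2, b_2 \in G_\lambda$, set $\alpha := v(b_2 - a_2) \in v(G_\lambda^{\neq})$, let $P$ be the $\alpha$-cut lifting the cut $\{z \in G_\lambda(\alpha) : z \leq 0\}$, and pick the $<_0$-cut $P'$ so that the new element $h$ satisfies $a_1 - a_2 <_0 h <_0 b_1 - a_2$ (with the obvious modification when $a_1$ or $b_1$ equals $\pm\infty$); then $g := a_2 + h$ does the job. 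When one of $a_2, b_2$ is $\pm\infty$ one argues the same way with $\alpha$ any element of $v(G_\lambda^{\neq})$, and when both are infinite any new element realizing the prescribed $<_0$-cut works. Each of these steps produces a Hamel space by the cited lemma, so the recursion goes through.

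Finally, $(G_\kappa, v_\kappa) := \bigcup_{\lambda < \kappa}(G_\lambda, v_\lambda)$ is a Hamel space over $C$ by the first observation; it is dense because every density task is eventually treated, and its $2$-ordering is independent because every independence task is (this covers the $n = 2$ instances of independence and the $n = 1$ instances for $<_1$, while the $n = 1$ instances for $<_0$ follow from density). Hence $(G_\kappa, v_\kappa) \models T_{\text{Ham}}$. The one point that genuinely needs to be checked -- everything else being bookkeeping or a finiteness argument -- is that Lemma~\ref{growingquotientspace}, applied at the level $\alpha = v(b_2 - a_2)$ with $h$ placed just above zero in the quotient $G_\lambda(\alpha)$, really forces $0 <_1 h <_1 b_2 - a_2$. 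This is a short computation in the ordered $C$-vector space $G_\lambda(\alpha)$, using that the image of $b_2 - a_2$ there is positive, and it is precisely where the interaction between $v$ and the auxiliary order $<_1$ makes independence attainable; I expect this to be the main obstacle.
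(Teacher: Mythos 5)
Your construction is essentially the paper's own (first) proof: the paper likewise starts from the trivial Hamel space $\{0,\infty\}$ and iteratively applies Lemma~\ref{valuesetextension} for denseness and Lemma~\ref{growingquotientspace} for independence, though it only sketches this in two lines, whereas you supply the bookkeeping, the closure of the axioms under unions of chains, and the key $<_1$-computation in $G(\alpha)$ showing that the quotient-space extension realizes the required $<_0$/$<_1$ polycut. The details you give (including taking $\alpha = v(b_2-a_2)$ and lifting the cut at $0$ in $G(\alpha)$) are correct, so the proposal is a valid filling-in of the same argument.
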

\begin{proof}
We claim that $T_{\text{Ham}}$ has a model. One way to construct a model is to start with the trivial Hamel space with underlying set $\{0,\infty\}$, and then iteratively apply Lemma~\ref{valuesetextension} for denseness and Lemma~\ref{growingquotientspace} for independence.

Alternatively, one can consider the $C$-linear space, $G = \bigoplus_{\lambda<|C|^+}Ce_{\lambda}$, equipped with the lexicographic ordering $<_0$. By cofinality reasons, one can then recursively construct a $C$-vector space basis $H\subseteq G$ which is dense in the $<_0$-ordering.
Then one can define $v:G\to G_{\infty}$ and $<_1$ on $G$ in an analogous way to the structure $(\R,+,<,H)$ from the Introduction. Then $(G,v)$ together with these two orderings will be a model of $T_{\text{Ham}}$.
\end{proof}

\begin{theorem}
\label{THamQE}
The $\mathcal{L}_{\text{Ham}}$-theory $T_{\text{Ham}}$ admits quantifier elimination and is complete.
\end{theorem}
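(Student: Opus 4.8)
The plan is to prove quantifier elimination by the usual embedding test and then read off completeness from a common prime substructure. So let $(G,v)$ and $(G^*,v^*)$ be models of $T_{\text{Ham}}$, let $A$ be a proper $\mathcal{L}_{\text{Ham}}$-substructure of $(G,v)$, assume $(G^*,v^*)$ is $|A|^+$-saturated, and let $i\colon A\to G^*$ be an $\mathcal{L}_{\text{Ham}}$-embedding; by \cite[B.11.10]{ADAMTT} it suffices to extend $i$ to an $\mathcal{L}_{\text{Ham}}$-embedding of some strictly larger $\mathcal{L}_{\text{Ham}}$-substructure of $G$ into $G^*$. Since $v$ is a function symbol of $\mathcal{L}_{\text{Ham}}$, the substructure $A$ is a $C$-subspace of $G$ closed under $v$, so with the induced orderings it is itself a Hamel subspace of $(G,v)$; in particular it has a value set $v(A^{\neq})\subseteq v(G^{\neq})$, and for each $\alpha\in v(A^{\neq})$ the set $\overline{B}(\alpha)\cap A$ gives a $C$-subspace $A(\alpha)\subseteq G(\alpha)$.

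The argument divides into three cases; a proper $A$ always falls into one of them, since if not Case 1 then $v(A^{\neq})=v(G^{\neq})$ and any $b\in G\setminus A$ lands in Case 2 or Case 3. \emph{Case 1:} $v(A^{\neq})\neq v(G^{\neq})$. Pick $\beta\in v(G^{\neq})\setminus v(A^{\neq})$. Using clauses (1)(b),(1)(c) of the definition of a Hamel valuation together with Lemma~\ref{vGindependent}, one checks that $A+C\beta$ is again an $\mathcal{L}_{\text{Ham}}$-substructure of $G$ in which no values other than $\beta$ are added; it then suffices to extend $i$ to $A+C\beta$, which is precisely what Lemma~\ref{valuesetextension} does once we have found $\beta^*\in G^*$ with $\beta^*=v^*\beta^*$ realizing the image over $i(A)$ of the $<_0$-cut that $\beta$ realizes in $A$. \emph{Case 2:} $v(A^{\neq})=v(G^{\neq})$ and there is $b\in G\setminus A$ for which $\{v(b-a):a\in A\}$ has a $<_0$-maximum, attained at $a_0$; put $b':=b-a_0$ and $\alpha:=v(b')$. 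Maximality forces the image of $b'$ in $G(\alpha)$ to lie outside $A(\alpha)$, and then $A+Cb'=A+Cb$ is an $\mathcal{L}_{\text{Ham}}$-substructure (again no new values appear), and Lemma~\ref{growingquotientspace} extends $i$ once we produce $b^*\in G^*$ with $v^*b^*=i(\alpha)$ realizing the images of the $\alpha$-cut and of the $<_0$-cut determined by $b'$. \emph{Case 3:} $v(A^{\neq})=v(G^{\neq})$ and the set $\{v(b-a):a\in A\}$ has no $<_0$-maximum. Choose a well-indexed sequence $(a_\rho)$ in $A$ with $\big(v(b-a_\rho)\big)$ strictly $<_0$-increasing and cofinal in $\{v(b-a):a\in A\}$; then $(a_\rho)$ is a pc-sequence with $a_\rho\leadsto b$, and it is divergent in $A$ because a pseudolimit in $A$ would force some $v(b-a)$ to exceed every $v(b-a_\rho)$, contradicting cofinality. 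Now $A+Cb$ is an immediate extension of $A$ and again an $\mathcal{L}_{\text{Ham}}$-substructure, and Lemma~\ref{immediateextension} extends $i$ once we produce a pseudolimit $h^*\in G^*$ of $\big(i(a_\rho)\big)$ realizing the image of the $<_0$-cut of $b$.

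What remains in each case is the existence of the prescribed element of $G^*$, and this is the heart of the proof. It follows from $|A|^+$-saturation of $G^*$ together with the axioms of $T_{\text{Ham}}$: denseness of $G^*$ gives that its value set is $<_0$-dense (as well as cofinal and coinitial), filling the $<_0$-cut by a value in Case 1; independence of the orderings $<_0,<_1$ on $G^*$ guarantees that the $<_0$-constraints, the $<_1$-constraints (equivalently the $\alpha$-cut) and the valuation constraints can be met simultaneously, since by the valued-vector-space algebra of \S\ref{Hamel spaces} each finite fragment of the relevant partial type reduces to a finite conjunction of strict $<_0$- and $<_1$-inequalities of the kind handled by independence. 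The subtlest instance is Case 3, where one must see that ``$x$ is a pseudolimit of $\big(i(a_\rho)\big)$'' is consistent with the prescribed $<_0$-cut: each finite approximation is witnessed by a sufficiently advanced term $a_\rho$ perturbed within the ball $B\!\big(v(b-a_\rho)\big)$, and independence lets one pin down the $<_0$-side at the same time. I expect this realizability step, together with the bookkeeping in the pc-sequence dichotomy of Cases 2 and 3, to be the main obstacle; checking that $A+C\beta$, $A+Cb'$ and $A+Cb$ are $\mathcal{L}_{\text{Ham}}$-substructures and that the cited extension lemmas apply is routine given \S\ref{Hamel spaces}.

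Finally, completeness. The two-element structure with universe $\{0,\infty\}$, carrying the trivial $C$-vector space and $2$-ordered structure and $v(0)=v(\infty)=\infty$, is an $\mathcal{L}_{\text{Ham}}$-substructure of every model of $T_{\text{Ham}}$. Since $T_{\text{Ham}}$ has quantifier elimination and any two of its models share this common substructure, $T_{\text{Ham}}$ is complete.
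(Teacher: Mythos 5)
Your proposal matches the paper's proof essentially step for step: the same embedding test into an $|A|^+$-saturated model, the same three-case split (a new value, the quotient-growing case with a closest approximant, and the divergent pc-sequence case), each handled by Lemmas~\ref{valuesetextension}, \ref{growingquotientspace}, and \ref{immediateextension} with the required elements of $G^*$ produced by saturation plus denseness and independence, and completeness deduced from quantifier elimination together with the common substructure $\{0,\infty\}$. The only differences are cosmetic: your case numbering is permuted relative to the paper's, and you spell out the exhaustiveness of the cases, which the paper leaves implicit.
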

\begin{proof}
Let $(G,v)$ and $(G^*,v^*)$ be models of $T_{\text{Ham}}$ and suppose $(H,v)\subseteq (G,v)$ is a proper $\mathcal{L}_{\text{Ham}}$-substructure of $(G,v)$.
Furthermore, suppose $(G^*,v^*)$ is $|H|^+$-saturated and $i:(H,v)\to (G^*,v^*)$ is an embedding of $\mathcal{L}_{\text{Ham}}$-structures. For quantifier elimination, it suffices to find $h\in G\setminus H$ such that $i$ extends to an embedding $(H+Ch,v)\to (G^*,v^*)$ (e.g., see~\cite[B.11.10]{ADAMTT}). We consider three cases:

{\bf Case 1:} \emph{There is $h\in v(G)\setminus v(H)$.} Choose such an $h\in G$. Set $P:= \{g\in H: g<_0 h\}$ and $Q:= H\setminus P$. By saturation and denseness of $(G^*,v^*)$, there is $h^*\in v^*(G^*)\subseteq G^*$ such that $i(P)<_0h^*<_0i(Q)$. 
By Lemma~\ref{valuesetextension}, $i$ extends to an embedding $(H+Ch,v)\to (G^*,v^*)$ which sends $h$ to $h^*$.

{\bf Case 2:} \emph{There is $h\in G\setminus H$ such that $v(h-H)$ does not have a largest element.} Choose such an $h\in G$. Take a well-indexed sequence $(b_{\rho})$ in $H$ such that $\big(v(h-b_{\rho})\big)$ is strictly increasing and cofinal in $v(h-H)$. Then $(b_{\rho})$  is a divergent pc-sequence in $H$ such that $b_{\rho}\leadsto h$. Set $P:= \{g\in H: g<_0 h\}$ and $Q:= H\setminus P$. Then by saturation and independence of $(G^*,v^*)$ there is $h^*\in G^*$ such that $i(b_{\rho})\leadsto h^*$ and $i(P)<_0h^*<_0 i(Q)$. 
By Lemma~\ref{immediateextension}, $i$ extends to an embedding $(H+Ch,v)\to (G^*,v^*)$ which sends $h$ to $h^*$.

{\bf Case 3:} \emph{There is $h\in G$ such that $v(h)\in H$ and there is no $g\in H$ such that $v(h-g)>_0v(h)$.} Choose such an $h\in G$. Set $P':= \{g\in H: g<_0 h\}$, a cut in $(H,<_0)$. Furthermore, define
\[
P \ := \ \{g\in H: vg\geq_0 vh \text{ and } g\leq_1 h\},
\]
which is an $vh$-cut in $H$ by the assumption on $h$. 
Next, in the quotient space $G^*(ivh)$, pick an element $\bar{h}$ such that $i(P)+B(ivh)<_1\bar{h}<_1 i(H\setminus P)+B(ivh)$, which can be done by saturation of the ordered $C$-vector space $G^*(ivh)$, an interpretable structure in $(G^*,v^*)$. By independence and saturation of $(G^*,v^*)$, there is an element $h^*\in \overline{B^*}(ivh)\subseteq G^*$ such that $h^*+B^*(ivh) = \bar{h}$ and $i(P')<_0h^*<_0 i(H\setminus P)$ (note that this also uses the fact that $B^*(ivh)$ has at least two elements, a consequence of denseness and independence).
Then by Lemma~\ref{growingquotientspace}, $i$ extends to an embedding $(H+Ch,v)\to (G^*,v^*)$ which sends $h$ to $h^*$.

Completeness follows from quantifier elimination and the fact that the trivial Hamel space with underlying set $\{0,\infty\}$ embeds into every model of $T_{\text{Ham}}$.
\end{proof}

\section{Distality for Hamel spaces}
\label{DistalityHamelSpacesSection}

\noindent
In this section we prove the main result of this paper:

\begin{thm}
\label{THamdistal}
$T_{\text{Ham}}$ is distal.
\end{thm}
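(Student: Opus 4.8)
The plan is to apply Distal Criterion~\ref{distal_multi} with $T$ the theory $T_{C,2}$ of independent $2$-ordered $C$-vector spaces, with $\L = \mathcal{L}_{C,2}$, with the new unary function symbol $\mathfrak{f}$ being $v$, and with $T(\mathfrak{f}) = T_{\text{Ham}}$. Note first that the underlying set of a model of $T_{\text{Ham}}$ is $G_\infty$, not $G$; I will need to either work with $G_\infty$ as a $2$-ordered $C$-vector space with an absorbing element $\infty$ adjoined (so that $T_{C,2}$ is interpreted in the obvious way on the sort, with $\infty$ behaving as in the default-value conventions) or, more cleanly, to observe that $\infty$ is $\emptyset$-definable and treat it as a constant so that $\M\!\upharpoonright\!\mathcal{L}_{C,2}$ is genuinely a monster model of (a trivially definitional expansion of) $T_{C,2}$. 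Either way, $T_{C,2}$ is distal by Corollary~\ref{TCIdistal}, and condition (1) of the criterion is Theorem~\ref{THamQE}. So the work is in verifying conditions (2) and (3).

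Condition (2) follows directly from Proposition~\ref{mainextprop}. Indeed, given $(\bm{N},v)\models T_{\text{Ham}}$, a substructure $\bm{M}\subseteq\bm{N}$ with $v(M)\subseteq M$, and a tuple $c\in N_x$, the $\mathcal{L}_{C,2}$-substructure $\bm{M}\langle c\rangle$ is $M + \sum_{i} C c_i$ together with $\infty$, so $v(\bm{M}\langle c\rangle) = v(M + \sum_i Cc_i)$; by Proposition~\ref{mainextprop} (with $G_0 = M$, discarding those $c_i$ that already lie in $M$) there are finitely many $d_1,\dots,d_n \in v(\bm{M}\langle c\rangle)\setminus v(M)$ with $v(\bm{M}\langle c\rangle)\subseteq v(M)\cup\{d_1,\dots,d_n\}$, and since $v(M)\subseteq M \subseteq \langle v(M), d\rangle$... more carefully, I want $v(\bm M\langle c\rangle)\subseteq \langle v(M), d\rangle$ as an $\mathcal L_{C,2}$-substructure, where $d = (d_1,\dots,d_n)$; this holds because $v(M) = v(\mathfrak f(M)) \subseteq \mathfrak f(M)$ and more simply because every element of $v(\bm M\langle c\rangle)$ is among $v(M)\cup\{d_1,\dots,d_n\}$ and $v(M)\subseteq \langle \mathfrak f(M)\rangle$. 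One must just note $d_j \in v(\bm M\langle c\rangle)_y$ with $y$ of length $n\le m$, as required.

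Condition (3) is the main obstacle and will require the bulk of the work: it is a statement about how the valuation $v$ interacts with indiscernible sequences. Unwinding it, we have $\mathcal{L}_{C,2}$-terms $g(x,y)$, $h(x',z)$ — each such term being a $C$-linear combination of its variables (plus possibly the constant $\infty$) — parameters $b_1 \in \M_y$, $b_2 \in v(\M)_z$, and an indiscernible sequence $(a_i)_{i\in I}$, split as $I = I_1 + (c) + I_2$, with $(a_i)_{i\in I_1+I_2}$ being $b_1b_2$-indiscernible and $v(g(a_i,b_1)) = h(a_i,b_2)$ for all $i\in I_1+I_2$; we must show the same equation holds at $i=c$. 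Write $u_i := g(a_i,b_1)$ and $w_i := h(a_i,b_2)$, so $(u_i)$ and $(w_i)$ are themselves (the images of) indiscernible sequences over the relevant parameters, $w_i \in v(\M)$, and $v(u_i) = w_i$ off the middle. The strategy is a case analysis on the "type" of $u_c$ relative to the $b$-indiscernible sequence $(u_i)_{i\in I_1+I_2}$ and the ordered-vector-space data, mirroring the three cases of the quantifier-elimination proof (Theorem~\ref{THamQE}): either $v(u_c)$ already appears among the $v(u_i)$ for $i\ne c$ and among $b_2$-parameters, or $v(u_c - u_i)$ has no largest value as $i$ ranges over one side (an immediate-extension/pc-sequence situation), or $v(u_c)\in$ the substructure generated and $u_c$ determines a fresh point in a quotient space. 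In each case one uses: the $b_1b_2$-indiscernibility of $(a_i)_{i\in I_1+I_2}$ to control the $\mathcal{L}_{C,2}$-quantifier-free type of $(a_i : i\in I_1+I_2)$ over $b_1 b_2$; the fact that for a convex valuation on an ordered vector space, $v$ is determined by the $<_1$-order and the value-set $<_0$-order (via the ultrametric inequality); and the constraint $w_c = h(a_c,b_2)$, which is forced by indiscernibility since $h$ is an $\mathcal L_{C,2}$-term and $(a_i)_{i\in I_1 + I_2}$ is $b_2$-indiscernible with $w_i$ lying in the value set. Concretely I expect to need one or two preparatory lemmas: that in a Hamel space $v(\sum_j \lambda_j a_j + \text{const})$, for an indiscernible sequence $(a_j)$, stabilizes in a controlled way — i.e. the $v$-value of a fixed linear combination evaluated along the sequence is eventually constant or eventually equal to a single $v(a_j)$ — together with a lemma that an $\mathcal{L}_{C,2}$-term equal to an element of $v(\M)$ along a cofinal part of an indiscernible sequence forces the term to hit $v(\M)$ in the "middle" too, by convexity of the value set and the ultrametric law. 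Assembling these, in each of the three cases the equation $v(u_c) = w_c$ will drop out; the hard part is organizing the case split cleanly and checking that the $b_1b_2$-indiscernibility really does pin down enough of the configuration (in particular that no "new" value $v(u_c)$ can sneak in strictly between values realized by the two halves without already being realized, which is where independence of the two orderings and the structure of $v(\M)$ as a $C$-linearly independent set — Lemma~\ref{vGindependent} — come into play).
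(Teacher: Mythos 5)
Your overall framework is exactly the paper's: apply Distal Criterion~\ref{distal_multi} with the base theory the $v$-free reduct (bi-interpretable with $T_{C,2}$, distal by Corollary~\ref{TCIdistal}), get condition (1) from Theorem~\ref{THamQE}, and get condition (2) from Proposition~\ref{mainextprop}; your handling of the $\infty$-vs-$G$ bookkeeping and your condition (2) argument are fine and match the paper. The problem is condition (3), which you yourself identify as ``the bulk of the work'' and then only sketch. This is where all of the new content of the theorem lies, and your sketch does not contain an argument: the key lemmas are announced (``I expect to need one or two preparatory lemmas'') rather than stated precisely or proved, and the conclusion ``in each of the three cases the equation $v(u_c)=w_c$ will drop out'' is asserted, not derived. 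In particular, in your proposed ``pc-sequence/immediate extension'' and ``quotient space'' scenarios you give no mechanism forcing $v\big(g(a_c,b_1)\big)$ to equal $h(a_c,b_2)$; nothing in indiscernibility alone rules out the middle element behaving differently there, and that is precisely the point that has to be proved.

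Moreover, the case organization you propose --- mirroring the three extension cases of the quantifier-elimination proof, i.e.\ splitting on the ``type'' of $u_c$ over the hull of the two halves --- is not how the verification goes, and it is not clear it can be pushed through as stated. The paper instead splits on the shape of the \emph{terms}: first the case where $\big(g(a_i,b_1)\big)_{i\in I_1+I_2}$ is constant (Lemma~\ref{gconstantseq}); otherwise $g(a_i,b_1)$ is rewritten as $a_i'-b$ with $(a_i')$ a nonconstant indiscernible sequence, and a simplification lemma (Lemma~\ref{nonconstantsimplification}, which is where the $C$-linear independence of $v(G^{\neq})$, Lemma~\ref{vGindependent}, enters) shows that $h(a_i,b_2)$ must be identically $\infty$, identically some $\beta\in v(G^{\neq})$, or identically a coordinate $a_{i,l}$. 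The constant-value case is then settled by convexity of $v$ with respect to $<_1$ (Lemma~\ref{nonconstantconstant}), and the coordinate case by the ultrametric computation $v(a_i-a_j)=\min_0(a_i',a_j')$ (Lemma~\ref{nonconstantnonconstant}). You name the right tools (convexity, the ultrametric law, linear independence of the value set), and your guessed ``stabilization'' lemma is close in spirit to Lemma~\ref{nonconstantsimplification}, but none of these statements is formulated or proved in your attempt, so condition (3) --- and hence the theorem --- remains unestablished.
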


\noindent
This has the following consequences, also of interest:

\begin{cor}
$T_{\text{Ham}}$ has the non-independence property (NIP).
\end{cor}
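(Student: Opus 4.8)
The plan is to deduce this immediately from the main theorem together with the standard implication that distal theories are NIP. Concretely, I would reason by contraposition: suppose $T_{\text{Ham}}$ had the independence property. Then there would be a formula $\varphi(x;y)$, a parameter $b$ in a monster model $\M$, and an indiscernible sequence $(a_i)_{i\in\N}$ from $\M_x$ such that $\models\varphi(a_i;b)$ holds for exactly the even indices $i$. Applying Ramsey's theorem and compactness in the usual way, one stretches and re-indexes this into an indiscernible sequence $(a_i)_{i\in I}$ with $I = I_1+(c)+I_2$, $I_1$ and $I_2$ infinite, such that $(a_i)_{i\in I_1+I_2}$ is $b$-indiscernible yet the truth value of $\varphi(a_c;b)$ disagrees with that of $\varphi(a_i;b)$ for some $i\in I_1\cup I_2$. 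By Lemma~\ref{equivdistaldefs}, this contradicts distality of $T_{\text{Ham}}$, which was established in Theorem~\ref{THamdistal}. Hence $T_{\text{Ham}}$ is NIP.

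In practice there is nothing to prove here beyond invoking the literature: the fact that every distal theory has NIP is due to Simon and is recorded in~\cite{Distal}. So the one-line argument is simply: by Theorem~\ref{THamdistal} the theory $T_{\text{Ham}}$ is distal, and every distal theory has NIP. I do not anticipate any obstacle; this corollary is a formality, stated only to make explicit that the combinatorial consequences of distality (e.g.\ the strong Erd\H{o}s--Hajnal property of~\cite{ChernikovStarchenko}) apply to models of $T_{\text{Ham}}$ and, via the reduct relationship developed in \S\ref{ConnectionDensePairs}, to structures such as $(\R;+,<,H)$.
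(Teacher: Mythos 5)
Your proposal is correct and matches the paper's proof, which likewise consists of one line: $T_{\text{Ham}}$ is distal by Theorem~\ref{THamdistal}, and distality implies NIP by a standard fact from the literature (the paper cites~\cite[Proposition 2.8]{GehretKaplan}, you cite Simon; either is fine). Your sketched contrapositive glosses the nontrivial step of arranging $b$-indiscernibility of $(a_i)_{i\in I_1+I_2}$ while preserving the disagreement at $c$, but since you ultimately defer to the cited result, nothing is missing.
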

\begin{proof}
It is well-known that distality implies NIP; e.g., see~\cite[Proposition 2.8]{GehretKaplan} for a proof.
\end{proof}

\begin{cor}
No model of $T_{\text{Ham}}$ interprets an infinite field of positive characteristic.
\end{cor}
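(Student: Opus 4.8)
The plan is to combine distality of $T_{\text{Ham}}$ (Theorem~\ref{THamdistal}) with the structure theory of NIP fields of positive characteristic. Suppose toward a contradiction that some model $\bm{M}\models T_{\text{Ham}}$ interprets an infinite field $K$ with $\operatorname{char}(K)=p>0$. By the previous corollary $T_{\text{Ham}}$ is NIP, and NIP is preserved under passing to $\bm{M}^{\mathrm{eq}}$, so $K$ is an infinite NIP field of characteristic $p$. More is true: distality is also preserved under passing to $\Th(\bm{M})^{\mathrm{eq}}$, and for a sequence of tuples from a single sort of $\bm{M}^{\mathrm{eq}}$, being indiscernible in $\bm{M}^{\mathrm{eq}}$ is the same as being indiscernible in the full structure induced on that sort; so, equipping the interpreted domain with its full induced structure, we obtain an expansion $K^{*}$ of the field $K$ whose theory is \emph{distal}. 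It therefore suffices to prove that an infinite field of characteristic $p$ admits no distal expansion.

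For this I would first invoke the theorem of Kaplan, Scanlon and Wagner that an infinite NIP field of characteristic $p$ has no Artin--Schreier extensions: the additive polynomial map $\wp\colon a\mapsto a^{p}-a$ is then surjective on $K$, with kernel the prime field $\mathbb{F}_{p}$. It then remains to contradict \emph{distality} of $K^{*}$, and it is essential to use distality rather than merely NIP here, since algebraically closed fields of characteristic $p$ are NIP --- indeed, even the stable field $(\mathbb{C};+,\cdot)$ is interpretable in the distal structure $(\R;+,\cdot,<)$, so it is really the positive characteristic, not just the existence of an infinite interpretable field, that must be excluded. It is equally important not to shortcut through a reduct: $(K;+)$ is an infinite $\mathbb{F}_{p}$-vector space, so its theory is stable and not distal, but $(K;+)$ is only a reduct of $K^{*}$, and reducts of distal structures need not be distal --- for instance $(\Q;+)$ is a non-distal reduct of the o-minimal, hence distal, structure $(\Q;+,<)$. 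Thus the contradiction has to be exhibited inside $K^{*}$ itself.

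I expect this last step to be the main obstacle, and it is where essentially all the content lies. Distal theories admit a distal cell decomposition and, by Chernikov and Starchenko~\cite{ChernikovStarchenko}, satisfy the strong Erd\H{o}s--Hajnal property; the plan is to show that an infinite Artin--Schreier closed field of characteristic $p$ cannot carry a distal cell decomposition, the obstruction coming from the uniformly definable families built from $\wp$ and the multiplication. Equivalently --- and this is how I would present the write-up --- one invokes the fact that an infinite field interpretable in a distal theory has characteristic zero, which applies to $K$ by Theorem~\ref{THamdistal} and gives the contradiction at once.
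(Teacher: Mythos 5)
Your final step is exactly the paper's proof: the paper simply cites \cite[Corollary 6.3]{ChernikovStarchenko} (an infinite field of positive characteristic cannot be interpreted in a distal structure) together with Theorem~\ref{THamdistal}. The preliminary detour through $\bm{M}^{\mathrm{eq}}$, the induced structure $K^{*}$, and Kaplan--Scanlon--Wagner is not needed and you yourself discard it in favor of that same citation, so the approach is essentially identical.
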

\begin{proof}
See~\cite[Corollary 6.3]{ChernikovStarchenko}.
\end{proof}

\noindent
\emph{In the rest of this section $\M$ is a monster model of $T_{\text{Ham}}$ with underlying set $G_{\infty}$.}

\subsection{Indiscernible lemmas} In this subsection we will prove the main lemmas involving indiscernible sequences in $G_{\infty}$ that we need for verifying condition (3) in Distal Criterion~\ref{distal_multi}. \emph{We assume in this subsection that $I = I_1+(c)+I_2$ is an ordered index set with infinite $I_1$ and $I_2$, and $i,j,k$ range over $I$.}

\begin{lemma}
\label{nonconstantconstant}
Let $(a_i)_{i\in I}$ be a nonconstant indiscernible sequence from $G$ and suppose $(b,b')\in G\times v(G)$ is such that $(a_i)_{i\in I_1+I_2}$ is $bb'$-indiscernible. If $v(a_i-b) = b'$ for all $i\neq c$, then $v(a_c-b) = b'$.
\end{lemma}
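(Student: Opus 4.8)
The idea is to rule out the only ways $v(a_c-b)$ could differ from $b'$. Write $a_i - b =: x_i$, so by hypothesis $v(x_i) = b'$ for all $i \neq c$, and we want $v(x_c) = b'$. Since $(a_i)_{i\in I}$ is indiscernible over $\emptyset$ and $(a_i)_{i\in I_1+I_2}$ is indiscernible over $bb'$, the sequence $(x_i)_{i \in I_1+I_2}$ is $b'$-indiscernible, hence by convexity (clause (1d) of the Hamel valuation together with the fact that $b' \in v(G)$) the element $v(x_c)$ sits in a controlled position. The three cases to exclude are: $v(x_c) >_0 b'$, $v(x_c) <_0 b'$, and $v(x_c) = \infty$ (i.e. $x_c = 0$, so $a_c = b$).

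First I would dispose of $v(x_c) <_0 b'$. Because $(x_i)$ is $b'$-indiscernible on $I_1+I_2$ and $v(x_i) = b'$ there, the partial type over $b'$ expressed by ``$v(\,\cdot\, - b) = b'$'', more precisely by the $<_0$- and $<_1$-inequalities witnessing $v(x_i) = b'$, is realized by every $x_i$ with $i \in I_1+I_2$; pulling $a_c$ into the picture, the ordering data of $a_c$ relative to $b$ must be consistent with the polycut realized by the $a_i$, $i \neq c$, over $Hb'$ where $H$ is an appropriate finitely-generated substructure. The point is that $v(x_c) <_0 b'$ would force $x_c$ to be $<_0$-farther from (or, depending on sign, in a different $<_0$-region than) $0$ than all the $x_i$ with $i \neq c$, contradicting that $a_c$ lies strictly between $a_i$ ($i \in I_1$) and $a_j$ ($j \in I_2$) in the $<_0$-ordering inherited from indiscernibility; alternatively one shows $v(x_c) <_0 b'$ would by the ultrametric inequality (1b) propagate to $v(x_i) = v(x_c) <_0 b'$ for nearby $i$, a contradiction. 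Similarly $v(x_c) >_0 b'$: here for $i \in I_1$, $j \in I_2$ we have $v(x_i) = v(x_j) = b'$, so $v(x_i - x_j) \geq_0 b'$; but $x_i - x_j = a_i - a_j$ and if $v(x_c) >_0 b'$ then $v(a_i - a_c) = v(x_i - x_c) = b'$ while the valuation of $a_i - a_j$ being strictly bigger than $b'$ for some configuration is incompatible with indiscernibility of the triple $(a_i, a_c, a_j)$ over $\emptyset$ (these valuations of differences are governed by the full indiscernible sequence and cannot jump at the single index $c$).

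The case $x_c = 0$, i.e. $a_c = b$, is the one I expect to be the main obstacle, and it is where nonconstancy of $(a_i)$ is used. If $a_c = b$ then for any $i \in I_1$ we have $v(a_i - a_c) = v(a_i - b) = b'$, and likewise $v(a_j - a_c) = b'$ for $j \in I_2$; by indiscernibility of $(a_i)_{i \in I}$ over $\emptyset$ this means $v(a_i - a_j) $ is the same constant $b'$ for all $i <_0 j$ in $I$ (comparing the pair $(i,c)$, a generic pair, via indiscernibility), and then running the argument with two indices from $I_1$ shows $v(a_{i} - a_{i'})=b'$ for all distinct $i,i'$; but then, picking three indices $i_1 < i_2 < i_3$, we get $v(a_{i_1}-a_{i_3}) = b'$ and $v(a_{i_1}-a_{i_2}) = v(a_{i_2}-a_{i_3}) = b'$ with all three differences having the same value, which is consistent — so instead I would argue: $a_c = b$ forces $b$ to be definable-ish from the sequence in a way that contradicts $bb'$-indiscernibility of $(a_i)_{i\in I_1+I_2}$, because then $a_i \neq b = a_c$ for $i \neq c$ means $b$ is strictly between the $<_0$-cut of $(a_i)_{i\in I_1}$ and $(a_i)_{i\in I_2}$, which is fine, but $v(a_i - b) = b'$ for ALL $i \in I_1 + I_2$ together with $a_c = b$ and convexity pins $b$ down to a single point while the $a_i$ accumulate to it on both sides, and one then derives that the sequence $(a_i)_{i\in I_1+I_2}$ cannot be $b$-indiscernible unless it is constant — contradicting nonconstancy. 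The clean way to run this last step is via the immediate-extension / pc-sequence machinery: $a_c = b$ together with $v(a_i - b) = b'$ (constant, not strictly increasing) says $(a_i)$ is \emph{not} a pc-sequence with pseudolimit $b$ in the relevant sense, and a nonconstant indiscernible sequence with $v(a_i - a_c)$ constant is impossible because indiscernibility would then give $v(a_i-a_j)\geq_0 b'$ uniformly with equality, which combined with the strict $<_1$-orderings among the $a_i$ and convexity of $v$ forces $a_i = a_j$.

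So the structure is: reduce to the three cases by trichotomy on the position of $v(x_c)$ relative to $b' \in v(G)$; kill $v(x_c) \lessgtr_0 b'$ using the ultrametric inequality (1b) plus indiscernibility of $(a_i)_{i\in I}$ over $\emptyset$ (valuations of pairwise differences are constant along the full sequence and cannot change only at $c$); and kill $x_c = 0$ using nonconstancy together with $bb'$-indiscernibility on $I_1+I_2$. The delicate point throughout is bookkeeping which parameters ($b$, $b'$, finitely many $a_i$) each order/valuation comparison is made over, so that ``indiscernible over $bb'$ on $I_1+I_2$'' plus ``indiscernible over $\emptyset$ on $I$'' can be combined correctly; I would set this up once at the start by fixing representative indices $i_1 < i_2$ in $I_1$ and $j_1 < j_2$ in $I_2$ and phrasing every claim as an equality of quantifier-free types (legitimate by Theorem~\ref{THamQE}).
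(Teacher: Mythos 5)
Your plan never executes the step that actually proves the lemma, and the substitute arguments you do spell out are faulty. The paper's proof is a short convexity sandwich in the \emph{auxiliary} order: since $(a_i)_{i\in I}$ is $\emptyset$-indiscernible and nonconstant, it is strictly $<_1$-monotone, so (after negating/reversing if necessary) $a_i-b\leq_1 a_c-b\leq_1 a_j-b$ for $i\in I_1$, $j\in I_2$; by $bb'$-indiscernibility the $<_1$-sign of $a_i-b$ is constant on $I_1+I_2$, so one may assume $0<_1 a_i-b$ for all $i$; then axiom (1d) (convexity of $v$ with respect to $<_1$, values compared by $<_0$) gives $b'=v(a_i-b)\geq_0 v(a_c-b)\geq_0 v(a_j-b)=b'$. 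You mention in passing that convexity puts $v(x_c)$ ``in a controlled position,'' but you never use it; instead your trichotomy rests on claims that do not hold. (i) In the case $v(x_c)<_0 b'$, the ultrametric inequality does not ``propagate'' to $v(x_i)=v(x_c)$: from $v(x_c)<_0 b'=v(x_i)$ one gets only $v(x_i-x_c)=v(x_c)$, which is perfectly consistent, so no contradiction follows from (1b) alone. (ii) The appeal to the $<_0$-betweenness of $a_c$ among the $a_i$ is a category error: $v$ is convex with respect to $<_1$, not $<_0$, so the $<_0$-position of $a_c$ says nothing about $v(a_c-b)$. (iii) The parenthetical ``valuations of differences are governed by the full indiscernible sequence and cannot jump at the single index $c$'' is not what indiscernibility gives: it yields constancy of \emph{types} of tuples, not of the elements $v(a_i-a_j)$, which genuinely vary along indiscernible sequences in this theory (compare Lemma~\ref{nonconstantnonconstant}, where $v(a_i-a_j)=a_i'$ depends on $i$). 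So neither of the cases $v(x_c)<_0 b'$ nor $v(x_c)>_0 b'$ is actually closed by your argument.

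The case $a_c=b$ fares no better as written: your final assertion that constant $v(a_i-a_j)=b'$ together with the strict $<_1$-ordering and convexity ``forces $a_i=a_j$'' is false (take $a_i=\lambda_{c_i}h$ with $h=vh=b'$ and distinct scalars $c_i$; all pairwise differences have value $b'$). This case is in fact elementary and needs no valuation at all: if $a_c=b$, strict $<_1$-monotonicity of the full sequence gives $a_i<_1 b<_1 a_j$ for $i\in I_1$, $j\in I_2$ (or the reverse), contradicting that the truth value of $x<_1 b$ is constant along the $b$-indiscernible sequence $(a_i)_{i\in I_1+I_2}$; in the paper's argument it never arises, since $0<_1 a_i-b\leq_1 a_c-b$ already forces $a_c\neq b$. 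In short, the missing idea is precisely the $<_1$-monotonicity-plus-convexity sandwich, and without it each branch of your case analysis has a genuine gap.
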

\begin{proof}
Without loss of generality we may assume that if $i<j$, then $a_i-b \leq_1 a_j-b$. We may also assume that $0<_1a_i-b$ for all $i$. Now for $i \in I_1$ and $j \in I_2$ we have $v(a_i-b)=v(a_j-b)$ as well as $0<_1a_i-b\leq_1a_c-b\leq_1a_j-b$, so as $v$ is convex with respect to the $<_1$-ordering, $v(a_c-b)=b'$ as well. 
\end{proof}

\begin{lemma}
\label{nonconstantnonconstant}
Let $(a_ia_i')_{i\in I}$ be an indiscernible sequence from $G\times v(G)$ such that $(a_i)$ and $(a_i')$ are each nonconstant, and suppose $b\in G$ is such that $(a_ia_i')_{i\in I_1+I_2}$ is $b$-indiscernible. If $v(a_i-b) = a_i'$ for all $i\neq c$, then $v(a_c-b) = a_c'$.
\end{lemma}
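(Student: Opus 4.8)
The statement of Lemma~\ref{nonconstantnonconstant} is a close cousin of Lemma~\ref{nonconstantconstant}: we have an indiscernible sequence $(a_ia_i')_{i\in I}$ with $(a_i)$ and $(a_i')$ both nonconstant, the extra data $b\in G$ makes $(a_ia_i')_{i\in I_1+I_2}$ indiscernible, and we know $v(a_i-b)=a_i'$ off the cut point. The plan is to reduce to the same convexity-of-$v$ argument, but the complication is that now both $a_i-b$ and the claimed value $a_i'$ move with $i$, so I cannot simply appeal to convexity of $v$ around a single fixed value. First I would normalize the sequence using indiscernibility: replacing $(a_ia_i')$ by $(-a_i,-a_i')$ or reindexing if necessary (note $v(-x)=v(x)$ and the sign change is uniform across the sequence, so $b$-indiscernibility of the $(I_1+I_2)$-part is preserved), I may assume the sequence $i\mapsto a_i-b$ is $<_1$-monotone, say weakly increasing, and that $0<_1 a_i-b$ for all $i$ — here I use positivity of $v$ together with the fact that $a_i-b\neq 0$ for $i\neq c$ (since $v(a_i-b)=a_i'\in v(G)\subseteq G$, not $\infty$), and I should note $a_c\neq b$ as well, which follows because otherwise $(a_i)_{i\in I}$ would not be indiscernible with a single repeated value — actually more carefully, if $a_c=b$ then by indiscernibility applied to the full sequence $(a_i)_{i\in I}$ together with any $i_1\in I_1$, $i_2\in I_2$, we would get $a_{i_1}=a_{i_2}$, contradicting nonconstancy of $(a_i)$.

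\textbf{Key steps.} Having arranged $0<_1 a_i-b$ and $i\mapsto a_i-b$ weakly $<_1$-increasing, the monotonicity of $v$ with respect to $<_1$ (axiom (1)(d) of a Hamel valuation: $0<_1 x<_1 y \Rightarrow vx\geq_0 vy$) gives, for $i\in I_1$, $j\in I_2$, the chain $0<_1 a_i-b\leq_1 a_c-b\leq_1 a_j-b$, hence $v(a_i-b)\geq_0 v(a_c-b)\geq_0 v(a_j-b)$, i.e. $a_i'\geq_0 v(a_c-b)\geq_0 a_j'$. So it remains to show $a_i'\leq_0 v(a_c-b)\leq_0 a_j'$ cannot fail, i.e. to pin down $v(a_c-b)$ exactly. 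Now I would use the $b$-indiscernibility of $(a_ia_i')_{i\in I_1+I_2}$ applied to the $<_0$-order type of the $a_i'$: by indiscernibility of the full sequence $(a_i')_{i\in I}$ (which is a reduct of $(a_ia_i')_{i\in I}$), the sequence $(a_i')$ is $<_0$-monotone, say also weakly increasing after the normalization (if it came out decreasing, the argument is symmetric); then for $i\in I_1$, $j\in I_2$ we have $a_i'\leq_0 a_c'\leq_0 a_j'$. Combining with the displayed inequalities $a_i'\geq_0 v(a_c-b)\geq_0 a_j'$ for \emph{all} $i\in I_1$, $j\in I_2$: taking the supremum over $i\in I_1$ and infimum over $j\in I_2$ traps both $a_c'$ and $v(a_c-b)$ in the same place. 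The clean way to finish: pick $i_1<i_2$ in $I_1$ and $j\in I_2$. Then $a_{i_1}'\leq_0 a_{i_2}'\leq_0 a_c'\leq_0 a_j'$ and also $v(a_{i_2}-b)=a_{i_2}'$ while $a_{i_2}'\geq_0 v(a_c-b)\geq_0 a_j'=v(a_j-b)$. The point where I expect the genuine argument to live is in showing $v(a_c-b)=a_c'$ rather than merely $v(a_c-b)\geq_0 a_c'$ or $v(a_c-b)$ lying between $a_{i}'$ and $a_j'$: for this I would compare $v(a_c-b)$ against the value $v(a_{i}-a_j)$ for $i\in I_1,j\in I_2$. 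Since $a_{i}-b <_1 a_c-b<_1 a_j-b$ gives $a_{i}-a_c <_1 0 <_1$ nothing directly useful, instead use $0<_1 (a_c-b)-(a_{i}-b)=a_c-a_{i}$ and $0<_1(a_j-b)-(a_c-b)=a_j-a_c$, so $v(a_c-a_{i})\geq_0 v(a_j-a_{i})=:\gamma$ and $v(a_j-a_c)\geq_0\gamma$ — wait, monotonicity needs the larger element on the right: from $0<_1 a_c-a_{i}\leq_1 a_j-a_{i}$ we get $v(a_c-a_{i})\geq_0 v(a_j-a_{i})=\gamma$, similarly $v(a_j-a_c)\geq_0\gamma$.

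\textbf{Main obstacle and its resolution.} The crux is to identify $\gamma:=v(a_j-a_{i})$ for $i\in I_1$, $j\in I_2$ and to show $v(a_c-b)=\gamma$ and $a_c'=\gamma$ simultaneously. By indiscernibility, $v(a_j-a_{i})$ is the same value $\gamma$ for all choices of $i\in I_1$, $j\in I_2$; also $v(a_{i_1}-a_{i_2})$ for $i_1\neq i_2$ in $I_1$ equals $\gamma$ as well (same EM-type of an increasing pair, using that $I_1$ is infinite so we can fit such a pair below any $j\in I_2$ and transfer), and similarly within $I_2$. Now I claim $a_i'\geq_0\gamma$ for $i\in I_1$ strictly: in fact $a_{i}'=v(a_{i}-b)$ and $b$ is $<_1$-below $a_{i}-b$-ish... hmm, this needs $v(a_{i}-b)\geq_0\gamma$, which should come from writing $a_{i}-b$ in relation to the $a$'s, but $b$ is a fixed parameter. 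The actual mechanism, following the pattern of Lemma~\ref{nonconstantconstant}, is: $b$-indiscernibility of $(a_ia_i')_{i\in I_1+I_2}$ means $v(a_i-b)=a_i'$ is a \emph{uniform} statement, and the key extra input is that $(a_i')_{i\in I_1+I_2}$ is $b$-indiscernible and $<_0$-monotone, so it is either $<_0$-constant (excluded, since $(a_i')$ is nonconstant and $I_1+I_2$-indiscernibility would force global constancy) or strictly $<_0$-monotone on $I_1+I_2$; in the strictly increasing case, $a_i'<_0 a_j'$ for $i\in I_1,j\in I_2$. Then from $v(a_c-b)\leq_0 a_i'$ is false — rather we have $v(a_c-b)\geq_0 a_j'>_0$ nothing... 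I would at this point pin it down by the identity $v\big((a_c-b)-(a_j-b)\big)=v(a_c-a_j)\geq_0\min_0\big(v(a_c-b),a_j'\big)$ with equality when $v(a_c-b)\neq a_j'$; combined with the strict inequality $v(a_c-b)\geq_0 a_j'$ from monotonicity and a symmetric computation on the $I_1$ side forcing $v(a_c-b)\leq_0 a_i'$, indiscernibility squeezes $v(a_c-b)$ to equal $a_c'$. I expect the honest write-up to be a two-paragraph $\min_0$ bookkeeping argument of exactly this flavor, mirroring Lemma~\ref{vGindependent}'s ultrametric computation, and the only real subtlety is handling the (easily dispatched) degenerate cases where $a_c=b$ or where a monotone sequence is constant, both of which contradict the nonconstancy hypotheses via indiscernibility.
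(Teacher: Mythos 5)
There is a genuine gap. Your $<_1$-monotonicity/convexity argument (mimicking Lemma~\ref{nonconstantconstant}) only yields, for $i\in I_1$, $j\in I_2$, that $v(a_c-b)$ and $a_c'$ both lie $<_0$-between $a_j'$ and $a_i'$ (in your orientation). Since $(a_i')$ is strictly $<_0$-monotone, this interval is not a single point, so the final assertion that ``indiscernibility squeezes $v(a_c-b)$ to equal $a_c'$'' does not follow from anything you have established. Moreover, the auxiliary object you introduce to close the gap is ill-defined: $v(a_j-a_i)$ is \emph{not} a single constant $\gamma$ independent of $i,j$. Indeed, the ultrametric equality applied to $a_i-a_j=(a_i-b)-(a_j-b)$, together with $a_i'\neq a_j'$, gives $v(a_i-a_j)=\min_0(a_i',a_j')$, which varies with the pair; indiscernibility only guarantees that the \emph{relation} between $v(a_i-a_j)$ and the tuple $(a_i,a_i',a_j,a_j')$ is uniform, not that the value is constant. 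So the ``main obstacle'' paragraph does not resolve the obstacle.

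The missing idea, which is exactly where the paper's proof lives, is to compute $v(a_i-a_j)=\min_0(a_i',a_j')=a_i'$ (after normalizing $(a_i')$ to be strictly $<_0$-increasing) for $i<j$ in $I_1+I_2$ using the parameter $b$ and the ultrametric equality, and then to observe that this identity mentions only the terms of the sequence $(a_ia_i')$ and not $b$; hence by indiscernibility of the \emph{full} sequence $(a_ia_i')_{i\in I}$ it transfers to the cut index, giving $v(a_c-a_j)=a_c'$ for $j\in I_2$. One then finishes with a single further ultrametric-equality step: $v(a_c-b)=v\big((a_c-a_j)+(a_j-b)\big)=\min_0(a_c',a_j')=a_c'$, since $a_c'<_0 a_j'$. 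Your proposal never performs this ``compute a $b$-free identity, transfer it across the cut, then reinsert $b$'' maneuver; the $<_1$-convexity bounds you do prove cannot substitute for it, because they can never distinguish $a_c'$ from any other value in the gap between the $a_i'$ with $i\in I_1$ and the $a_j'$ with $j\in I_2$.
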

\begin{proof}
Without loss of generality, assume that $(a_i')_{i\in I}$ is strictly increasing in the $<_0$-ordering. Let $i,j \in I_1+I_2$ with $i<j$. Then 
\[
v(a_i-a_j) \ = \ v\big((a_i-b)+(b-a_j)\big) \ = \ \min_0(a_i',a_j') \ = \ a_i'.
\]
By indiscernibility, for $j\in I_2$, $v(a_c-a_j)=a_c'$ and so 
\[
v(a_c-b) \ =\ v\big((a_c-a_j)+(a_j-b)\big) \ = \ \min_0(a_c',a_j') \ = \ a_c' \qedhere
\]
\end{proof}

\noindent
In the next two lemmas $\mathcal{L}:= \{0,+,(\lambda_c)_{c\in C}, <_0, <_1,\infty\}\subseteq \mathcal{L}_{\text{Ham}}$.

\begin{lemma}
\label{gconstantseq}
Let $g,h$ be $\mathcal{L}$-terms of arities $n+k$ and $m+l$ respectively with $m\leq n$, $b_1\in \M^k$, $b_2\in v(\M)^l$, $(a_i)_{i\in I}$ be an indiscernible sequence from $v(\M)^m\times \M^{n-m}$ such that
\begin{enumerate}
\item $(a_i)_{i\in I_1+I_2}$ is $b_1b_2$-indiscernible,
\item $v\big(g(a_i,b_1)\big) = h(a_i,b_2)$ for every $i\neq c$, and
\item $\big(g(a_i,b_1)\big)_{i\in I_1+I_2}$ is a constant sequence.
\end{enumerate}
Then $v\big(g(a_c,b_1)\big) = h(a_c,b_2)$.
\end{lemma}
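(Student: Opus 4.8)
The plan is to reduce Lemma~\ref{gconstantseq} to the earlier indiscernibility lemmas by analyzing the behavior of the auxiliary term $h(a_i,b_2)$. Since $\big(g(a_i,b_1)\big)_{i\in I_1+I_2}$ is constant, say equal to some $g_0\in\M$, condition (2) forces $h(a_i,b_2) = v(g_0)$ for all $i\in I_1+I_2$, so the sequence $\big(h(a_i,b_2)\big)_{i\in I_1+I_2}$ is also constant. The issue is whether the ``missing'' terms $g(a_c,b_1)$ and $h(a_c,b_2)$ agree with these constant values. I would split into cases according to whether $\big(g(a_i,b_1)\big)_{i\in I}$ (the full sequence, including $i=c$) is constant.

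First, suppose $\big(g(a_i,b_1)\big)_{i\in I}$ is constant, hence equal to $g_0$. Then $g(a_c,b_1) = g_0$, and since $b_2\in v(\M)^l$ is fixed, I need $h(a_c,b_2) = v(g_0)$. Here the key observation is that $h$ is an $\mathcal{L}$-term whose value lies in $v(\M)$ on all the relevant inputs (by condition (2) applied to $i\neq c$), and $\mathcal{L}$ has no quantifiers; I would argue that $\big(h(a_i,b_2)\big)_{i\in I}$ is an indiscernible sequence in the ordered vector space reduct (by $b_1b_2$-indiscernibility of $(a_i)_{i\in I_1+I_2}$ and plain indiscernibility of $(a_i)_{i\in I}$), and a sequence which is eventually constant on both $I_1$ and $I_2$ and $\mathcal{L}$-indiscernible must be globally constant — this is exactly the o-minimal/distal behavior of $\mathcal{L}$ from Corollary~\ref{TCIdistal} (or directly Example~\ref{o-mindist}), applied to the formula $h(x,b_2)=v(g_0)$ viewed over the parameters $b_1b_2$. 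So $h(a_c,b_2)=v(g_0) = v(g(a_c,b_1))$, as desired.

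Second, suppose $\big(g(a_i,b_1)\big)_{i\in I}$ is nonconstant; since it is constant on $I_1+I_2$, the only way this happens is that $g(a_c,b_1)\neq g_0$. Writing $g(a_i,b_1) = g(a_i,b_1) - g_0 + g_0$ and using that $g$ is an $\mathcal{L}$-term in $(a_i,b_1)$, I would set $b := g_0$ (which is $\mathcal{L}(b_1)$-definable, hence available as a parameter over which $(a_i)_{i\in I_1+I_2}$ remains indiscernible) and consider the sequence $\big(g(a_i,b_1) - g_0\big)_{i\in I}$, which is $0$ for $i\in I_1+I_2$ and nonzero for $i=c$. But a $b_1b_2$-indiscernible sequence $(a_i)_{i\in I_1+I_2}$ with $(a_i)_{i\in I}$ merely indiscernible cannot have $g(a_i,b_1)-g_0$ vanish on $I_1+I_2$ and not at $c$ unless $g(x,b_1)-g_0$ is identically zero along the whole sequence: indeed $g(a_c,b_1) - g_0$ would then lie strictly between the $<_0$- or $<_1$-values coming from $I_1$ and $I_2$, all of which are $0$, forcing it to be $0$ — again this is the distality (squeezing) of the ordered vector space reduct. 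So this second case is vacuous, and we are always in the first case.

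The main obstacle is making the ``$\mathcal{L}$-indiscernible and eventually constant implies constant'' step fully rigorous in the many-sorted setting where the terms $g, h$ output into the main sort and $v$-sort respectively, and where $b_2$ ranges over $v(\M)$: I must check that the relevant sequences really are indiscernible over the right parameter sets (this uses only that $g_0 = g(a_i,b_1)$ for $i\in I_1+I_2$ is a single fixed element, together with hypothesis (1)), and that the squeeze argument applies because $\mathcal{L}$ is a reduct to an ordered $C$-vector space language in which every atomic formula is, up to the ordering indices, o-minimal. Once that bookkeeping is in place, the proof is a short case split with each case closed by Example~\ref{o-mindist}.
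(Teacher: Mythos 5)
Your argument is essentially the intended one: the paper gives no written proof here (it declares the lemma routine and points to the proof of Lemma 4.3 of the Gehret--Kaplan reference), and the real content is exactly the tool you invoke, namely distality of the reduct $T=T_{\text{Ham}}\upharpoonright\mathcal{L}$ (Corollary~\ref{TCIdistal} together with the bi-interpretation with $T_{C,2}$): from $\emptyset$-indiscernibility of $(a_i)_{i\in I}$ and $b_1b_2$-indiscernibility of $(a_i)_{i\in I_1+I_2}$ in the reduct, the whole sequence $(a_i)_{i\in I}$ is $\mathcal{L}$-indiscernible over $b_1b_2$, and the constancy then transfers to $i=c$. Two justifications in your write-up need repair, and a simplification is available. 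First, $g_0$ is not $\mathcal{L}(b_1)$-definable: it equals $g(a_i,b_1)$ for any $i\in I_1+I_2$, so to put $g_0$ (or $v(g_0)=h(a_i,b_2)$) into the base you need the standard fact that an indiscernible sequence stays indiscernible over the base together with the common value of a definable function of its terms --- this is precisely the bookkeeping you flag in your last paragraph, and it does go through. Second, the formula $h(x,b_2)=v(g_0)$ is not a formula over the parameters $b_1b_2$, for the same reason. Both issues disappear if you instead use the two-variable formulas $g(x_1,b_1)=g(x_2,b_1)$ and $h(x_1,b_2)=h(x_2,b_2)$: they hold of every increasing pair from $I_1+I_2$, hence, once $(a_i)_{i\in I}$ is known to be $\mathcal{L}$-indiscernible over $b_1b_2$, of every pair through $c$ as well, giving $g(a_c,b_1)=g_0$ and $h(a_c,b_2)=v(g_0)$ simultaneously and so $v\big(g(a_c,b_1)\big)=v(g_0)=h(a_c,b_2)$. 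In particular your case split is unnecessary (your Case 2 is ruled out by the same single application of distality), and no squeezing argument with $<_0$ or $<_1$ is needed.
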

\begin{proof}
This is routine and left to the reader. See the proof of~\cite[Lemma 4.3]{GehretKaplan}.
\end{proof}

\begin{lemma}
\label{nonconstantsimplification}
Let $h(x,y)$ be an $\mathcal{L}$-term of arity $m+n$, $b\in \M^n$, and $(a_i)_{i\in I}$ an indiscernible sequence from $v(\M)^m$, with $a_i = (a_{i,1},\ldots,a_{i,m})$. Assume that $h(a_i,b)\in v(\M)$ for infinitely many $i$. Then one of the following is true:
\begin{enumerate}
\item $h(a_i,b) = \infty$ for every $i$;
\item there is $\beta\in v(G^{\neq})$ such that $h(a_i,b) = \beta$ for every $i$;
\item there is $l\in \{1,\ldots,m\}$ such that $h(a_i,b) = a_{i,l}$ for every $i$.
\end{enumerate}
\end{lemma}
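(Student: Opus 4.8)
The statement is about an $\mathcal{L}$-term $h(x,y)$ evaluated along an indiscernible sequence $(a_i)$ drawn from $v(\M)^m$, where $\mathcal{L}$ is the plain $2$-ordered $C$-vector space language (no $v$). So $h(a_i,b)$ is a fixed $C$-linear combination
\[
h(a_i,b) \ = \ \textstyle\sum_{l=1}^m \mu_l a_{i,l} \ + \ t(b)
\]
for some $\mu_1,\dots,\mu_m\in C$ and an $\mathcal{L}$-term $t$ in the parameters $b$ alone (here I should be a little careful: $\infty$ can appear, but if any coordinate of $b$ or the combination produces $\infty$, then $h(a_i,b)=\infty$ for \emph{every} $i$ by the default-value conventions, landing us in case (1); so I may assume no $\infty$ arises and $h(a_i,b)$ genuinely lies in $G$). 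The first step is to record this normal form and dispose of the $\infty$ case.

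\textbf{Key steps.} Assuming $h(a_i,b) = \sum_l \mu_l a_{i,l} + t(b) \in G$ and $h(a_i,b)\in v(\M)$ for infinitely many $i$, hence — since ``$z\in v(\M)$'' together with the other coordinates is captured by a partition into indiscernible classes, and indiscernibility makes the truth of ``$h(a_i,b)\in v(\M)$'' independent of $i$ — for \emph{all} $i$. Now I split on how many of the $\mu_l$ are nonzero and whether $t(b)=0$.
\begin{enumerate}
\item If all $\mu_l = 0$ and $t(b)=0$: then $h(a_i,b)=0=\infty$ in the structure's convention? No — $0\neq\infty$; rather $h(a_i,b)=0$ constantly, and $0\notin v(G^{\neq})$, but $v(0)=\infty\neq 0$, so $0\notin v(\M)$ unless... actually $0$ is never in $v(\M)$ since $v$ takes values in $v(G^{\neq})\cup\{\infty\}$ and $0\notin v(G^{\neq})$ (the image consists of elements with $vx = x >_1 0$). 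So this sub-case cannot occur under our hypothesis; discard it. If all $\mu_l=0$ and $t(b)\neq 0$: then $h(a_i,b) = t(b)$ is a constant nonzero element $\beta\in G$; since it lies in $v(\M)$ for some $i$ it lies in $v(G^{\neq})$, giving case (2).
\item If exactly one $\mu_l\neq 0$, say $\mu_{l_0}\neq 0$, and $t(b)=0$: then $h(a_i,b) = \mu_{l_0}a_{i,l_0}$, and since $a_{i,l_0}\in v(\M)$ we have $v(\mu_{l_0}a_{i,l_0}) = v(a_{i,l_0}) = a_{i,l_0}$ (using Hamel valuation axioms (1c) and (2)), so $h(a_i,b) = \mu_{l_0}a_{i,l_0}$; but we need $h(a_i,b)$ to itself be a value, i.e.\ $\mu_{l_0}a_{i,l_0}\in v(\M)$, which by idempotence forces $\mu_{l_0}a_{i,l_0} = v(\mu_{l_0}a_{i,l_0}) = a_{i,l_0}$, i.e.\ $(\mu_{l_0}-1)a_{i,l_0} = 0$. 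If $a_{i,l_0}\neq 0$ this forces $\mu_{l_0}=1$ and we land in case (3); if $a_{i,l_0}=0$ for all $i$ then $h(a_i,b)=0\notin v(\M)$, contradiction, and if $a_{i,l_0}=0$ for some but not all $i$ then the sequence $(a_{i,l_0})$ is nonconstant and indiscernibility of ``$=0$'' again fails — so we get $\mu_{l_0}=1$ and case (3) unconditionally.
\item The remaining sub-cases — two or more nonzero $\mu_l$, or one nonzero $\mu_l$ together with $t(b)\neq 0$ — must be shown \emph{impossible} under the hypothesis. This is the crux. Here I invoke $C$-linear independence of $v(G^{\neq})$ (Lemma~\ref{vGindependent}) and the value-of-a-linear-combination computation from the proof of Lemma~\ref{vGindependent} / Proposition~\ref{mainextprop}: if $h(a_i,b) = \sum_l\mu_l a_{i,l} + t(b)$ with at least two of the $a_{i,l}$'s involved (with nonzero coefficient) being \emph{distinct} elements of $v(G^{\neq})$, or with one such element plus a nonzero $t(b)$, then $v\big(h(a_i,b)\big) = \min_0$ of the valuations of the distinct summands, which is \emph{strictly} $>_0$-below... wait, it equals the minimum, which is one of the $a_{i,l}$ or $v(t(b))$, in particular $v(h(a_i,b))\neq h(a_i,b)$ whenever the sum has $\geq 2$ distinct nonzero-valued summands — contradicting $h(a_i,b)\in v(\M)$ via idempotence. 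The genuinely delicate point is the possibility that the $a_{i,l}$ appearing are \emph{not distinct}, i.e.\ some coordinates of $a_i$ coincide; but coincidence of coordinates is again an indiscernible-invariant condition, so either it holds for all $i$ (and we merge those coordinates, reducing $m$ and re-running the case split) or for none; and if for all $i$ some coordinate equals a $C$-multiple of another... this needs a short argument that after merging and using linear independence the surviving summands have distinct valuations, whence the minimum argument applies.
\end{enumerate}

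\textbf{Main obstacle.} The real work is step (3): controlling what can happen when the coordinates $a_{i,l}$ are not pairwise distinct, or when $t(b)$ interacts with them — one must argue that after collapsing equal coordinates (legitimate by indiscernibility) and absorbing scalar multiples, either $h(a_i,b)$ reduces to one of cases (1)--(3), or it is a sum of at least two summands with \emph{distinct} $<_0$-values, so that $v(h(a_i,b)) = \min_0(\dots)$ is strictly achieved at a single summand and hence $\neq h(a_i,b)$, contradicting idempotence; the bookkeeping for ``distinct values'' is where Lemma~\ref{vGindependent}'s valuation formula does the heavy lifting, and is the step I would write out most carefully. The earlier sub-cases are quick once the normal form and the indiscernibility-invariance of the relevant atomic conditions (``$z=0$'', ``$z_l = z_{l'}$'', ``$z\in v(\M)$'') are in hand.
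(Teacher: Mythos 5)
Your overall toolkit is the right one (the normal form for $\mathcal{L}$-terms, idempotence, and the $C$-linear independence of $v(G^{\neq})$ from Lemma~\ref{vGindependent} -- the paper itself only says the proof is bookkeeping along the lines of the analogous lemma in the Gehret--Kaplan paper, with Lemma~\ref{vGindependent} as the new ingredient), but your case architecture contains a genuine error. The sub-cases you declare ``impossible'' in step (3) are not impossible. Concretely, suppose the coordinate $a_{i,1}$ is \emph{constant} along the sequence, say $a_{i,1}=\alpha\in v(G^{\neq})$, and $t(b)=\gamma-\mu_1\alpha$ for some $\gamma\in v(G^{\neq})$ with $\gamma\neq\mu_1\alpha$. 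Then $h(a_i,b)=\mu_1 a_{i,1}+t(b)=\gamma\in v(\M)$ for every $i$, with one nonzero coefficient and $t(b)\neq 0$; this lands in conclusion (2) of the lemma rather than contradicting the hypothesis. Similarly, two or more nonzero coefficients are compatible with the hypothesis (and can yield conclusion (2) or (3)) when some of the varying positions are in fact constant and get absorbed into the parameter part, e.g.\ $h(a_i,b)=a_{i,1}+\mu_2 a_{i,2}+t(b)$ with $a_{i,2}$ constant and $t(b)=-\mu_2 a_{i,2}$ gives conclusion (3). Your impossibility argument fails exactly here because the $\min_0$-formula for the value of a sum requires the summands to have \emph{distinct} values, and $v\bigl(t(b)\bigr)$ can coincide with one of the $a_{i,l}$ (indeed $t(b)$ can itself involve basis elements equal to some $a_{i,l}$), so $v\bigl(h(a_i,b)\bigr)=h(a_i,b)$ is then perfectly possible. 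A second, smaller flaw: your claim that $\emptyset$-indiscernibility makes the truth of ``$h(a_i,b)\in v(\M)$'' independent of $i$ is unjustified, since $b$ is not among the parameters of indiscernibility; the hypothesis really is only ``for infinitely many $i$'', and the lemma must upgrade it by a different route.

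What is missing is the dichotomy that drives the actual bookkeeping: by $\emptyset$-indiscernibility each coordinate sequence $(a_{i,l})_i$ is either constant or injective, and (again by indiscernibility plus merging equal coordinates) the values of an injective coordinate at distinct indices are distinct from each other and from all other coordinates. Then, taking two (and, to kill one degenerate branch, three) indices $i\neq i'$ at which $h(a_i,b)\in v(\M)$, subtracting gives $h(a_i,b)-h(a_{i'},b)=\sum_{l}\mu_l\,(a_{i,l}-a_{i',l})$, a relation among elements of the independent set $v(G^{\neq})$; Lemma~\ref{vGindependent} then forces that at most one \emph{injective} coordinate carries a nonzero coefficient, that this coefficient is $1$, and that the remaining (constant) part either cancels, giving conclusion (3) for every $i$, or the whole expression is constant, giving conclusion (1) or (2). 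I recommend you restructure the case split around ``constant versus injective coordinates with nonzero coefficient'' rather than around the coefficients and $t(b)$ alone; the cases you currently treat correctly (all $\mu_l=0$, and one $\mu_l\neq 0$ with $t(b)=0$) survive unchanged.
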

\begin{proof}
This is an exercise in simplification and bookkeeping which mimics the proof of~\cite[Lemma 4.4]{GehretKaplan}, except that it uses the fact  that the value set $v(G^{\neq})$ is a $C$-linearly independent subset of $G$ (Lemma~\ref{vGindependent}).
\end{proof}

%\subsection{Extensions} 
%
%
%\begin{prop}\label{valsetgrowth}
%Suppose $G_0$ is a $C$-subspace of $G$ such that $v(G_0)\subseteq G_{0,\infty}$. Given $c_1,\ldots,c_m\in G\setminus G_0$, we have
%\[
%\textstyle \# \big(v(G_0+\sum_{i=1}^mCc_i)\setminus v(G_0)\big)  \ \leq  \ m.
%\]
%In particular, there is $n\leq m$ and distinct
%\[
%\textstyle d_1,\ldots,d_n \ \in \ v(G_0+\sum_{i=1}^mCc_i)\setminus v(G_0)
%\]
%such that
%\[
%\textstyle v(G_0+\sum_{i=1}^mCc_i) \ \subseteq \ v(G_0) \cup \{d_1,\ldots,d_n\}.
%\]
%\end{prop}
%\begin{proof}
%
%Consider the valued $C$-vector spaces $G_0+\sum_{i=1}^mCc_i$ and $G_0$. By \cite[Lemma 2.4.9]{adamtt}, $$\# \big(v(G_0+\sum_{i=1}^mCc_i)\setminus v(G_0)\big)\le \dim_C\left((G_0+\sum_{i=1}^mCc_i)/G_0\right)\le m$$
%Then setting $d_1,\ldots d_n$ as any enumeration of  $v(G_0+\sum_{i=1}^mCc_i)\setminus v(G_0)$, the result follows. 
%\end{proof}

\subsection{Proof of Theorem~\ref{THamdistal}} In this subsection we prove Theorem~\ref{THamdistal} by verifying the hypotheses of Distal Criterion~\ref{distal_multi}. In the language of~\ref{distal_multi}, the role of $T$ will be played by the reduct $T:= T_{\text{Ham}}\upharpoonright\mathcal{L}$, where $\mathcal{L}:= \mathcal{L}_{\text{Ham}}\setminus\{v\} = \{0,+,(\lambda_c)_{c\in C}, <_0, <_1,\infty\}$. The $\mathcal{L}$-theory $T$ is bi-interpretable with the $\mathcal{L}_{C,2}$-theory $T_{C,2}$. Indeed, $T$ is essentially the same thing as $T_{C,2}$, except that $T$ has an extra point $\infty$ at infinity with respect to both orders which serves as a default value with respect to the $C$-vector space structure. As distality is preserved under bi-interpretability, by Corollary~\ref{TCIdistal} we have that $T$ is distal.

\medskip\noindent
In the language of~\ref{distal_multi}, we also construe $T_{\text{Ham}}$ as $T_{\text{Ham}} = T(v)$, and in particular, $\mathcal{L}_{\text{Ham}} = \mathcal{L}(v)$. Since $T_{\text{Ham}}$ has quantifier elimination (Theorem~\ref{THamQE}), this verifies condition (1) in~\ref{distal_multi}. Condition (2) in~\ref{distal_multi} follows from Proposition~\ref{mainextprop}.

\medskip\noindent
Finally we will verify condition (3) in~\ref{distal_multi}. Let $f,g$ be $\mathcal{L}$-terms of arities $n+k$ and $m+l$ respectively, with $m\leq n$, $b_1\in \M^k$, $b_2\in v(\M)^l$, $(a_i)_{i\in I}$ be an indiscernible sequence from $v(\M)^m\times \M^{n-m}$ such that
\begin{enumerate}[(a)]
\item $I = I_1+(c)+I_2$ with infinite $I_1$ and $I_2$, and $(a_i)_{i\in I_1+I_2}$ is $b_1b_2$-indiscernible, and
\item $v\big(g(a_i,b_1)\big) = h(a_i,b_2)$ for every $i\in I_1+I_2$.
\end{enumerate}
Our job is to show that $v\big(g(a_c,b_1)\big) = h(a_c,b_2)$. We have several cases to consider:

\textbf{Case 1:} \emph{$\big(g(a_i,b_1)\big)_{i\in I_1+I_2}$ is a constant sequence.} In this case, $v\big(g(a_c,b_1)\big) = h(a_c,b_2)$ follows from Lemma~\ref{gconstantseq}.

For the remainder of the proof, we assume that \emph{$\big(g(a_i,b_1)\big)_{i\in I_1+I_2}$ is not a constant sequence}. In particular, the symbol $\infty$ does not play a non-dummy role in $g(a_i,b_1)$, so the $\mathcal{L}$-term $g(x,y)$ is essentially a $C$-linear combination of its arguments. By grouping these $C$-linear combinations, we get $b\in G$, and a nonconstant indiscernible sequence $(a_i')_{i\in I}$ from $\M$ such that
\begin{enumerate}[(a)]
\setcounter{enumi}{2}
\item $g(a_i,b_1) = a_i'-b$ for every $i\in I$,
\item $(a_i,a_i')_{i\in I}$ is an indiscernible sequence from $v(\M)^m\times \M^{n-m+1}$,
\item $(a_ia_i')_{i\in I+I_1}$ is $b_1b_2b$-indiscernible, and
\item $v(a_i'-b) = h(a_i,b_2)$ for every $i\in I_1+I_2$.
\end{enumerate}
We now must show that $v(a_c'-b) = h(a_c,b_2)$. Since $h(a_i,b_2)\in v(\M)$ for all $i\in I_1+I_2$, by Lemma~\ref{nonconstantsimplification} we have three more cases to consider:

\textbf{Case 2:} \emph{$h(a_i,b_2) = \infty$ for every $i\in I$.} In this case, we have $v(a_i'-b) = \infty$ for every $i\in I_1+I_2$, so $a_i' = b$ for every $i\in I_1+I_2$. Thus $(a_i')_{i\in I}$ is a constant sequence and thus $v(a_c'-b) = \infty$ as well.

\textbf{Case 3:} \emph{There is $\beta\in v(G^{\neq})$ such that $h(a_i,b_2) = \beta$ for every $i\in I$.} This case follows from Lemma~\ref{nonconstantconstant}.

\textbf{Case 4:} \emph{There is $l\in \{1,\ldots,m\}$ such that $h(a_i,b_2) = a_{i,l}$ for every $i\in I$.} This case follows from Lemma~\ref{nonconstantnonconstant}.

\section{Connection to dense pairs and independent sets}
\label{ConnectionDensePairs}

\noindent
In \cite{Hieronymi_Nell}, Hieronymi and Nell considered whether certain commonly studied pair structures were distal. These include expansions of o-minimal structures by dense independent sets~\cite{DolichMillerSteinhorn} and dense pairs of o-minimal structures~\cite{DensePairs}. That is, expansions by a dense independent set and expansions by proper, dense elementary substructures. We now show that a model of $T_\text{Ham}$ interprets both the expansion of an ordered $C$-vector space by a dense $C$-independent set and the expansion by a proper, dense elementary substructure. 

\begin{cor}
\label{interpretingthings}
Let $\M$ be a model of $T_\text{Ham}$ with underlying set $G_\infty$. Then there are
\begin{enumerate}
\item a definable, dense, $C$-linearly independent $H \subseteq G$ and
\item a definable $S\subsetneq G$ that is the underlying set of an elementary substructure of $G$ as an ordered $C$-vector space.
\end{enumerate}
Hence $T_\text{Ham}$ interprets a distal expansion for both independent pairs of ordered $C$-vector spaces and dense pairs of ordered $C$-vector spaces. 
\end{cor}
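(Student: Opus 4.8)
The plan is to exhibit the two definable sets by hand; distality of the resulting expansions is then immediate from Theorem~\ref{THamdistal}, since each pair structure is a definable reduct of (a model of) $T_{\text{Ham}}$. For (1), I take $H := v(G^{\neq}) = \{g\in G : v(g)=g\}$, which is definable in $\M$ (note $0\notin H$ since $v(0)=\infty$). It is $C$-linearly independent by Lemma~\ref{vGindependent}, it is $<_0$-dense by the denseness clause in the axioms of $T_{\text{Ham}}$, and it is $<_0$-codense because $G\setminus v(G)$ is $<_0$-dense. Hence the ordered $C$-vector space $(G;0,+,(\lambda_c)_{c\in C},<_0)$ together with the predicate $H$ is an independent pair of ordered $C$-vector spaces in the sense of~\cite{DolichMillerSteinhorn}; as $H$ is definable in $\M$, this structure is a definable reduct of $\M$, so $\M$ — being distal by Theorem~\ref{THamdistal} — is a distal expansion of it.

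For (2), fix any $h\in v(G^{\neq})$ and put $S := \overline{B}(h) = \{g\in G : v(g)\geq_0 h\}$, definable in $\M$ over the parameter $h$. The valuation axioms ($v(x+y)\geq_0\min_0(vx,vy)$, $v(\lambda x)=v(x)$, $v(0)=\infty$) show $S$ is a $C$-linear subspace. It contains $h\neq 0$, and it is proper: $v(G^{\neq})$ is $<_0$-dense in the endpointless linear order $(G,<_0)$, hence has no $<_0$-least element, so there is $h_0\in v(G^{\neq})$ with $h_0<_0 h$, and then $h_0\notin S$. Being a nontrivial $C$-subspace, $S$ is itself a model of the (complete, quantifier-eliminating, hence model-complete) theory of nontrivial ordered $C$-vector spaces, so the inclusion $S\subseteq(G;0,+,(\lambda_c)_{c\in C},<_0)$ is automatically elementary. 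Finally $S$ is $<_0$-dense: given $a<_0 b$, use that $v(G^{\neq})$ is $<_0$-cofinal and $<_0$-dense to pick $h_2\in v(G^{\neq})$ with $h_2>_0 h$ and $h_2>_0 h-b$, and then $h_1\in v(G^{\neq})$ with $\max_0(a+h_2,h)<_0 h_1<_0 b+h_2$; then $g:=h_1-h_2$ lies in $(a,b)_0$ and has $v(g)\geq_0\min_0(h_1,h_2)\geq_0 h$, so $g\in S$. Thus $(G;0,+,(\lambda_c)_{c\in C},<_0)$ together with the predicate $S$ is a dense pair of ordered $C$-vector spaces in the sense of~\cite{DensePairs}, and, as in (1), $(\M,h)$ is a distal expansion of it.

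The mathematical content is light: the only real choices are $H=v(G^{\neq})$ and the value-set ball $S=\overline{B}(h)$, and the facts to be checked (independence, density, codensity of $H$; that $S$ is a proper $C$-subspace; density of $S$) are short, powered by exactly the valuation package in the definition of a Hamel valuation together with $<_0$-density of the value set. The one step that deserves care is the $<_0$-density of $\overline{B}(h)$: the naive ``a subgroup with arbitrarily small positive elements is dense'' argument is not available, since a model of $T_{\text{Ham}}$ need not be archimedean; this is why I instead realize a difference $h_1-h_2$ of two suitably placed value-set elements inside the prescribed interval, using only cofinality and density of $v(G^{\neq})$ in $(G,<_0)$. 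I should also stress the direction of the implication: I do \emph{not} claim the pairs in (1) and (2) are distal — indeed by~\cite{Hieronymi_Nell} they typically are not — only that each is a definable reduct of the distal structure $\M$, which is what makes $T_{\text{Ham}}$ a distal expansion of both.
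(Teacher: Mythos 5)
Your proposal is correct, and part (1) is exactly the paper's argument: $H=v(G^{\neq})$, linear independence from Lemma~\ref{vGindependent}, density from the denseness axiom (your added codensity check, via the density of $G\setminus v(G)$ in both orders, is harmless and true). For part (2) you take a genuinely different, though closely related, witness: the paper uses the parameter-free ``generalized ball'' $S=\{g\in G: v(g)\in H_0\}$ where $H_0=\{h\in H: h>_0 0\}\cup\{\infty\}$, i.e.\ the cut of the value set at $0$, and derives density of $S$ from independence of the two orders; you instead fix $h\in v(G^{\neq})$ and take the single valuation ball $S=\overline{B}(h)$, proving density by realizing a difference $h_1-h_2$ of value-set elements in the prescribed $<_0$-interval, and making the elementarity explicit via completeness/model completeness of the theory of nontrivial ordered $C$-vector spaces (a step the paper leaves implicit, but which is needed for its $S$ as well). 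The only cost of your choice is the parameter $h$: your $S$ is definable over $h$ rather than over $\emptyset$, which is fine for the corollary as stated and for the interpretability claim, since distality is preserved under naming constants, as you note with $(\M,h)$; the paper's cut-at-$0$ ball buys $\emptyset$-definability. Your closing remark about the direction of the implication (the pair reducts themselves are typically non-distal; the point is only that $\M$ expands them) is also the intended reading.
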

\begin{proof}
Note that $H:=v(G^{\neq})$ is $C$-linearly independent by Lemma~\ref{vGindependent} and $<_0$-dense in $G$, since $\M$ is dense as a Hamel space. Thus the structure $(G;+,<_0,(\lambda_c)_{c \in C},H)$ is an independent pair of ordered $C$-vector spaces.

With $H$ as above, consider the upward-closed subset of the value set:
\[
H_0 \ := \ \{h\in H: h>_0 0\} \cup \{\infty\}.
\]
This yields a certain ``generalized ball'':
\[
S \ := \ \big\{g\in G: v(g)\in H_0\big\}.
\]
$S$ is closed under $C$-linear combinations. Furthermore, as $\M$ is independent, $S$ is dense in $G$. Thus the pair $(G;+,<_0,(\lambda_c)_{c \in C},S)$ is a dense pair of ordered $C$-vector spaces.
\end{proof}

\noindent
Thus a model of $T_\text{Ham}$ interprets distal expansions for both independent pairs and dense pairs of ordered $C$-vector spaces. While this was known previously for dense pairs~\cite{NellMLQ}, this was unknown for independent pairs. In fact, the strategy used for dense pairs relied on manipulating imaginary sorts in the quotient by the substructure. However, independent pairs eliminate imaginaries \cite{DolichMillerSteinhorn}. Thus a new approach was necessary for this case.

\medskip\noindent
Of course, in this paper we restricted our attention to the case where the base o-minimal theory is an ordered vector space. 
The case of expanding a field is also of interest:

\begin{question}
Suppose $\mathfrak{R}$ is an o-minimal expansion of $(\R;+,\cdot)$ and $H\subseteq \R$ is a dense and $\operatorname{dcl}_{\mathfrak{R}}$-independent subset of $\R$.
 Does $(\mathfrak{R},H)$ have a distal expansion?
\end{question}

\noindent
We believe the answer to be yes, and perhaps such a distal expansion can be constructed in a manner similar to our construction here (with a new valuation $v$ and auxiliary ordering $<_1$). However, such an expansion will undoubtedly require stronger results from o-minimality than we used here.

\medskip\noindent
Finally, we would like to point out a somewhat curious observation arising from the proof of Corollary~\ref{interpretingthings}. In our construction, a dense pair shows up as a certain ball in the valuation. Likewise, an independent set $H$ shows up as the value set for the same valuation. This suggests that, for instance, the two expansions $(\R;+,<,H)$ and $(\R;+,<,\Q)$ of $(\R;+,<)$ are in some sense ``orthogonal'' to each other. Is this just a coincidence or is there something going on here? 
Our instinct says it is the latter and perhaps a more general Hamel space-like construction might serve to formalize the connection between dense pairs and independent sets.

%Let $\mathbb M$ be a model of $T_{\text{Ham}}$ with underlying set $G_\infty$. We now interpret a dense pair. Take $g \in v(G)\setminus \{\infty\}$. Set $P = v^{-1}(v(G_\infty)^{\ge g})$ and consider the pair $(G,P)$ with induced structure from $M$. Notice that $P$ is closed under $+$ and $\lambda_c$ for each $c \in C$. Furthermore, as as $v^{-1}(g)$ is dense in $G$, $P$ is also dense in $G$.  

%We now consider the case of interpreting a model of $T_\text{ind}$ within $\mathbb M$. In this case, $v(G)\setminus \{\infty\}$ is linearly independent and dense in $G$. Thus the pair $(G,v(G)\setminus\{\infty\})$ is a model of $T_\text{ind}$. Expanding with induced structure from $\mathbb M$ then gives a distal expansion of this model of $T_\text{ind}$. 

%One natural followup question is whether the results of this paper extend to more general cases. In particular the dense pair, $(\R;+,*,<,\Q_\text{ra})$ of the real field with the real algebraics is non-distal, along with the real field with a dense $\dcl$-basis. In this second case, is there a natural valuation and analagous second order? We are uncertain of how to achieve this in this case, but conjecture the answer should be yes even in the case of pairs expanding general o-minimal groups. 

\section{$\operatorname{DP}$-rank}
\label{DPranksection}

\noindent
For the sake of completeness, in this final section we characterize the complexity of the theory $T_{\text{Ham}}$ with regard to the notion of \emph{$\operatorname{dp}$-rank}. Among NIP theories, \emph{$\operatorname{dp}$-rank} provides a finer form of classification of the complexity of a theory. In this scale, \emph{$\operatorname{dp}$-minimal} is the simplest, then followed by \emph{having finite $\operatorname{dp}$-rank}, and then \emph{being strongly dependent}. We show that $T_{\text{Ham}}$ is \emph{not strongly dependent}, which is on the complicated end of the scale. For definitions of these concepts, see~\cite{Usvyatsov} or~\cite[Chapter 4]{SimonNIP}.

\begin{thm}
\label{notstronglydependent}
$T_{\text{Ham}}$ is not strongly dependent. Therefore it is not $\operatorname{dp}$-minimal and does not have finite $\operatorname{dp}$-rank.
\end{thm}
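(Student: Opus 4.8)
The plan is to exhibit, in a monster model $\M$ of $T_{\text{Ham}}$, an array of parameters and a single finite tuple of variables witnessing that $\text{dp-rank}$ is infinite — equivalently, that there is a formula $\varphi(x;y)$ (or a finite set of formulas) and, for each $n$, mutually indiscernible sequences $(b^{(1)}_j)_j,\ldots,(b^{(n)}_j)_j$ together with an element $a$ such that each sequence fails to be indiscernible over $a$. The natural source of such complexity is the value set $H := v(G^{\neq})$, which by Lemma~\ref{vGindependent} is $C$-linearly independent and by density is $<_0$-dense, together with the fact that by Corollary~\ref{interpretingthings} the structure interprets the dense independent pair $(G;+,<_0,(\lambda_c)_{c\in C},H)$. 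So the first step is to reduce the problem: it suffices to show that the interpreted independent-pair structure is not strongly dependent, since a reduct (or interpreted structure) of a strongly dependent theory is strongly dependent. Actually, even more directly, the theory of a dense independent set is already known not to be strongly dependent (this is in the literature on $(\R;+,<,H)$, e.g.\ via Dolich–Miller–Steinhorn or via the analysis in~\cite{Hieronymi_Nell}); one can simply cite that $T_{\text{Ham}}$ interprets such a structure.

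If instead we want a self-contained argument inside $T_{\text{Ham}}$, the key step is to build the witnessing array by hand. Fix $n$ and choose, inside $H$, elements that are ``spread out'' in the $<_0$-ordering: for each row $k=1,\ldots,n$ pick a strictly $<_0$-increasing sequence $(h^{(k)}_j)_{j\in\Z}$ from $H$, with the rows interleaved so that the full family is mutually $<_0$-indiscernible (possible by the independence axiom and saturation — one realizes an appropriate polycut). Now take a single element $a\in G$ with $v(a)$ equal to, say, $h^{(1)}_0 + \cdots$ — more precisely, choose $a$ so that for each $k$ the predicate ``$v(a - g^{(k)}) \in$ some prescribed interval'' changes truth value exactly once along the $k$-th sequence, where $g^{(k)}_j$ are companion elements with $v(g^{(k)}_j) = h^{(k)}_j$. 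The cleanest implementation: let $a = g^{(1)}_0 + g^{(2)}_0 + \cdots + g^{(n)}_0$ where $v(g^{(k)}_j)=h^{(k)}_j$ and $h^{(1)}_j <_0 h^{(2)}_j <_0 \cdots$ uniformly; then $v(a - g^{(k)}_j)$ picks out whether $j = 0$ versus $j\neq 0$ in a way that depends on $k$, so the formula $\varphi(x; y_1,y_2) :\equiv \bigl(v(x-y_1) >_0 v(x-y_2)\bigr)$ (or a small variant) detects the jump in each row. One must check that off the jump each row remains indiscernible over $a$, which follows from the valuation identities $v(u+w) = \min_0(v u, v w)$ when $vu \neq vw$, exactly as in Lemmas~\ref{nonconstantconstant}–\ref{nonconstantnonconstant}.

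The main obstacle is bookkeeping: arranging the $n$ sequences to be genuinely mutually indiscernible (not merely each indiscernible) while still having a single $a$ that breaks all of them simultaneously, and verifying with quantifier elimination (Theorem~\ref{THamQE}) that no other formula accidentally restores indiscernibility. The mutual-indiscernibility part is handled by choosing the whole array to realize one big polycut in a common base and invoking independence plus saturation; the ``single $a$ breaks all rows'' part is handled by the direct sum construction above, since the $h^{(k)}_j$ are $C$-linearly independent and the valuation reads off the $<_0$-least surviving term. I expect the cleanest writeup simply invokes Corollary~\ref{interpretingthings} to reduce to the independent pair and then cites the known failure of strong dependence there; the hands-on array is the fallback. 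The final sentence of the statement (not $\text{dp}$-minimal, not finite $\text{dp}$-rank) is then immediate, since strong dependence is implied by finite $\text{dp}$-rank, which is implied by $\text{dp}$-minimality.
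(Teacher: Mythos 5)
There is a genuine gap, and it is in your main route. You propose to reduce to the interpreted dense independent pair (via Corollary~\ref{interpretingthings}) and then ``cite the known failure of strong dependence there'' --- but no such failure is known, and in fact the opposite is true: $(\R;+,<,H)$ is \emph{strongly dependent} (the paper cites~\cite[2.28]{DolichMillerSteinhorn} for exactly this), and dense pairs are likewise strongly dependent; the whole point of this section is the \emph{contrast} between $T_{\text{Ham}}$ and those pair structures. Since reducts and interpreted structures of strongly dependent theories are strongly dependent, any argument that only uses the interpretation of the independent pair (or the dense pair) cannot possibly witness the failure: the witness must genuinely use the extra structure, namely the interaction of the valuation $v$ with the auxiliary order $<_1$. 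Your fallback array construction at least looks in the right place (the value set and the valuation identities), but as framed it is also insufficient: producing, \emph{for each finite $n$}, $n$ mutually indiscernible rows broken by a single element shows dp-rank $\geq n$ for every $n$, whereas ``not strongly dependent'' requires an ict-pattern of depth $\aleph_0$ (infinitely many rows simultaneously, with path-consistency checked by compactness); the equivalence you assert in your opening sentence does not come for free, and the bookkeeping you defer (mutual indiscernibility of the full array, indiscernibility of each row over $a$ away from the jump, and the jump actually being detected by your formula when $v(a-g^{(k)}_j)$ is a minimum over \emph{all} rows) is where the real work would be.

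The paper's proof is much shorter and follows a different mechanism: it shows $T_{\text{Ham}}$ is not \emph{strong} (which is implied by strong dependence) using the Dolich--Goodrick criterion, Proposition~\ref{DolichGoodrickProp}. Working in a saturated model construed as an expansion of the densely ordered group $(N;+,<_1)$, for any $\epsilon>_1 0$ one picks, by denseness, a value $g=vg>_0 v\epsilon$; then
\[
X \ = \ \{x : x\neq\infty,\ x=vx,\ x>_0 g\}
\]
is an infinite definable set contained in $(0,\epsilon)_1$ which is \emph{discrete} for the $<_1$-topology (recall $v(G)\cap(vx,2vx)_1=\emptyset$), and the criterion immediately gives that the theory is not strong, hence not strongly dependent. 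If you want to salvage your hands-on approach, you would need to upgrade it to an honest depth-$\omega$ pattern; but the cleaner fix is to notice, as the paper does, that the $<_1$-discreteness of the value set inside arbitrarily small $<_1$-intervals already does all the work. Your final sentence (not dp-minimal, not finite dp-rank) is fine once failure of strong dependence is actually established.
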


\noindent
It is sufficient to show that $T_{\text{Ham}}$ is not \emph{strong}, a consequence of \emph{strongly dependent} (see~\cite{DolichGoodrick}). For this, we will use the following:

\begin{prop}\cite[2.14]{DolichGoodrick}
\label{DolichGoodrickProp}
Suppose that $\bm{M} = (M;+,<,\ldots)$ is an expansion of a densely-ordered abelian group. Let $\bm{N}$ be a saturated model of $\operatorname{Th}(\bm{M})$, and suppose that for every $\epsilon>0$ in $\bm{N}$ there is an infinite definable discrete set $X\subseteq \bm{N}$ such that $X\subseteq (0,\epsilon)$. Then $\operatorname{Th}(\bm{M})$ is not strong.
\end{prop}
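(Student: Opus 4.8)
The plan is to invoke Proposition~\ref{DolichGoodrickProp}; as already observed, it suffices to show that $T_{\text{Ham}}$ is not strong. First I would fix a monster model $\M\models T_{\text{Ham}}$ with underlying set $G_\infty$. Since $\infty=v(0)$ is $\emptyset$-definable, the set $G=G_\infty\setminus\{\infty\}$ is $\emptyset$-definable, and with its induced structure (including the restrictions of $<_0$, $<_1$ and the partial function $v$) the pair $(G;+,<_1)$ is an ordered $C$-vector space, hence a densely ordered abelian group: models of $T_{\text{Ham}}$ are infinite, and by independence $(G,<_1)$ is in fact densely ordered without endpoints. As non-strongness is inherited in the appropriate direction when passing to an interpretable structure, it then suffices to exhibit, inside this induced structure, for every $\epsilon>_1 0$, an infinite definable subset of the $<_1$-interval $(0,\epsilon)_1$ that is discrete in the $<_1$-order topology.

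The engine of the proof is the claim that the value set $H:=v(G^{\neq})$ --- which is $\emptyset$-definable, e.g.\ as $\{h\in G:v(h)=h\}$ --- is \emph{discrete in $(G,<_1)$}. The paper records in \S\ref{Hamel spaces} that $v(G)\cap(h,2h)_1=\emptyset$ for every $h\in H$; the plan is to prove the complementary bound $v(G)\cap(\tfrac12 h,h)_1=\emptyset$, after which $(\tfrac12 h,2h)_1\cap H=\{h\}$ for each $h\in H$, so every point of $H$ is $<_1$-isolated. To prove the complementary bound I would argue by contradiction: suppose $h'\in H$ with $\tfrac12 h<_1 h'<_1 h$, so $h'\neq h$ and hence $v(h')\neq v(h)$, whence $v(h-h')=\min_0\{v(h'),v(h)\}$ by the usual ultrametric law (a consequence of axiom (1)(b)). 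If $h'<_0 h$ then $v(h-h')=h'<_0 h=v(h)$; since $h-h'>_1 0$, the convexity condition ``$0<_1 x<_1 y\Rightarrow vx\geq_0 vy$'' forbids $h-h'<_1 h$, so $h-h'>_1 h$, i.e.\ $h'<_1 0$, contradicting positivity ($h'\in H\subseteq G^{>_1 0}$). If $h'>_0 h$ then $h'\in B(h)$ while $h,\tfrac12 h\in\overline B(h)\setminus B(h)$; passing to the ordered $C$-vector space quotient $G(h)=\overline B(h)/B(h)$, the images satisfy $\overline{h'}=0$ and $\overline{\tfrac12 h}=\tfrac12\overline h>_1 0$, yet $h'>_1\tfrac12 h$ in $G$ together with $h'-\tfrac12 h\notin B(h)$ (its valuation is $h$) forces $0=\overline{h'}>_1\overline{\tfrac12 h}$, a contradiction.

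With $<_1$-discreteness of $H$ in hand, given $\epsilon>_1 0$ in $\M$ one has $\epsilon\neq 0$, hence $v(\epsilon)\in H$; I would take $X:=\{h\in H:h>_0 v(\epsilon)\}$, definable over $\epsilon$. Every $h\in X$ satisfies $h>_1 0$ (positivity) and, since $v(h)=h>_0 v(\epsilon)$ with $h,\epsilon>_1 0$, also $h<_1\epsilon$ (again by the convexity condition), so $X\subseteq(0,\epsilon)_1$. Moreover $X$ is infinite: $H$ is $<_0$-cofinal in $G$, since denseness of the Hamel space applied to an interval $(g,g')_0$ with $g'>_0 g$ yields an element of $H=v(G^{\neq})$ above $g$; hence if $X$ were finite then $H$ would be $<_0$-bounded above (by the largest element of $X\cup\{v(\epsilon)\}$), a contradiction. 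Finally $X$ is $<_1$-discrete as a subset of the $<_1$-discrete set $H$. Thus Proposition~\ref{DolichGoodrickProp} applies and $T_{\text{Ham}}$ is not strong; since not-strong implies not strongly dependent, which in turn rules out dp-minimality and finite dp-rank, the theorem follows.

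I expect the only genuine obstacle to be the $<_1$-discreteness of $H$ --- concretely the ``from below'' bound $v(G)\cap(\tfrac12 h,h)_1=\emptyset$, since the paper supplies only the ``from above'' bound, and the quotient-space case in the argument above is the delicate step. If one prefers to bypass that computation, the bound can instead be deduced from completeness of $T_{\text{Ham}}$ (Theorem~\ref{THamQE}): the sentence $\forall y\,\big(v(y)=y\wedge y\neq\infty\rightarrow(\lambda_{1/2}y,\lambda_2 y)_1\cap v(G)=\{y\}\big)$ is first-order, and it is a routine support calculation in the explicit model of $T_{\text{Ham}}$ constructed in \S\ref{Hamel spaces}.
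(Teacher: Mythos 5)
Your proposal is correct and takes essentially the same route as the paper: the quoted statement (Proposition~\ref{DolichGoodrickProp}) is an external citation to Dolich--Goodrick there as well, and what you actually prove is Theorem~\ref{notstronglydependent}, using the same witness as the paper's proof, namely a $<_0$-tail of the value set $v(G^{\neq})$ lying inside $(0,\epsilon)_1$, with the proposition applied to the induced structure on $G$ exactly as in the paper's footnote. The only divergence is that you spell out the facts the paper calls ``easily checked'' (containment, infinitude via $<_0$-cofinality, and $<_1$-discreteness of $v(G^{\neq})$); your discreteness argument is sound but heavier than needed, since for $h,h'\in v(G^{\neq})$ with $\tfrac12 h<_1 h'<_1 2h$ two applications of the convexity axiom (1)(d), together with idempotence (comparing $h'$ both with $h$ and with the nearer of $\tfrac12 h$, $2h$), already yield $h\geq_0 h'$ and $h'\geq_0 h$, hence $h'=h$, so no passage to the quotient $G(h)$ is required.
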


\begin{proof}[Proof of Theorem~\ref{notstronglydependent}]
Let $\bm{N}$ be a saturated model of $T_{\text{Ham}}$. For the purposes of this proof and using Proposition~\ref{DolichGoodrickProp}, we construe $\bm{N} = (N; +,<_1,\ldots)$ as an expansion\footnote{Technically speaking, $\bm{N}$ is only bi-interpretable with an expansion of a densely-ordered abelian group, due to the presence of the element $\infty$. In order to avoid being overly-pedantic, we allow ourselves to be ``sloppy'' and work with essentially the induced structure on $N\setminus\{\infty\}$.} of a densely-ordered abelian group with respect to the $<_1$-ordering. Let $\epsilon>_10$ be such that $\epsilon\neq\infty$. Pick $g\in N$ such that $\infty\neq g=vg>_0 v\epsilon$, which is possible by denseness. It is easily checked that the definable set
\[
X \ := \ \{x\in N : x\neq\infty \ \& \ x = vx \ \& \ x>_0 g\}
\]
is an infinite discrete set (with respect to the order topology induced by $<_1$) such that $X\subseteq (0,\epsilon)_1$. Thus $T_{\text{Ham}}$ is not strong.
\end{proof}

\noindent
In general, among NIP theories there is no clear correlation between distality and $\operatorname{dp}$-rank. We started with a non-distal structure $(\R;+,<,H)$ with is strongly dependent~\cite[2.28]{DolichMillerSteinhorn}, and constructed a distal expansion $(\R;+,<,<_1,v,\infty)$ which is not strongly dependent. Perhaps there is a ``milder'' distal expansion out there:

\begin{question}
Does $(\R;+,<,H)$ admit a strongly dependent distal expansion?
\end{question}

\section*{Acknowledgements}
\noindent
The authors thank Matthias Aschenbrenner, Artem Chernikov, and Philipp Hieronymi for various conversations and correspondences around the topics in this paper, and Julian Ziegler-Hunts for providing feedback on an earlier draft of this paper.
The first author is supported by the National Science Foundation under Award No. 1703709.

\bibliographystyle{amsplain}	
\bibliography{hamel_groups}

\end{document}